\theoremstyle{plain}
\newtheorem{thm}{Theorem}[section]
\newtheorem{prop}[thm]{Proposition}
\newtheorem{lemma}[thm]{Lemma}
\newtheorem{cor}[thm]{Corollary}
\theoremstyle{definition}
\newtheorem{defn}[thm]{Definition}
\newtheorem*{defn*}{Definition}
\newtheorem*{question*}{Question}
\newtheorem{question}{Question}
\newtheorem{example}[thm]{Example}
\newtheorem*{example*}{Example}
\newtheorem{rem}[thm]{Remark}
\newtheorem*{rem*}{Remark}
\newcommand{\field}[1]{\mathbb{#1}}
\newcommand{\N}{\field{N}}
\newcommand{\Z}{\field{Z}}
\newcommand{\R}{\field{R}}
\newcommand{\F}{\field{F}}
\newcommand{\C}{\field{C}}
\newcommand{\A}{\field{A}}
\newcommand{\G}{\field{G}}
\newcommand{\ideal}[1]{\mathfrak{#1}}
\newcommand{\m}{\ideal{m}}
\newcommand{\p}{\ideal{p}}
\newcommand{\q}{\ideal{q}}
\newcommand{\ia}{\ideal{a}}
\newcommand{\func}[1]{\mathrm{#1} \,}
\newcommand{\Spec}{\func{Spec}}
\DeclareMathOperator{\Min}{Min}
\newcommand{\arrow}[1]{\stackrel{#1}{\rightarrow}}
\newcommand{\ra}{\rightarrow}
\newcommand{\be}{\begin{enumerate}}
\newcommand{\ee}{\end{enumerate}}
\newcommand{\li}
 {\leftfootline}
\newcommand{\onto}{\twoheadrightarrow}
\newcommand{\into}{\hookrightarrow}
\newcommand{\cA}{\mathcal{A}}
\newcommand{\cB}{\mathcal{B}}
\newcommand{\cC}{\mathcal{C}}
\newcommand{\cO}{\mathcal{O}}
\renewcommand{\phi}{\varphi}
\DeclareMathOperator{\chr}{char}
\DeclareMathOperator{\Max}{Max}
\newcommand{\red}{{\mathrm{red}}}
\let\int\relax
\DeclareMathOperator{\int}{i}
\newcommand{\ua}{unit-additive}
\newcommand{\alg}{{\rm{alg}}}
\newcommand{\pins}{purely inseparable}
\begin{document}

\title[Fundamental varieties and unit-additive rings]
{Fundamental algebraic sets and locally unit-additive rings}

\author{Neil Epstein}
\address{Department of Mathematical Sciences \\ George Mason University \\ Fairfax, VA  22030}
\email{nepstei2@gmu.edu}

\date{October 16, 2025}

\begin{abstract}
The Fundamental Theorem of Algebra can be thought of as a statement about the real numbers as a space, considered as an algebraic set over the real numbers as a field.  This paper introduces what it means for an algebraic set or affine  variety over a field to be \emph{fundamental}, in a way that encompasses the Fundamental Theorem of Algebra as a special case. The related concept 
of \emph{local} 
fundamentality is introduced and its behavior developed.  On the algebraic side, the notions of \emph{locally}, \emph{geometrically}, and \emph{generically unit-additive} 
rings are introduced, thus complementing \emph{unit-additivity} as previously defined by the author and Jay Shapiro. A number of results are extended from the previous joint paper from unit-additivity to local unit-additivity. It is shown that an affine variety is (locally) fundamental if and only if its coordinate ring is (locally) unit-additive.  To do so, a theorem is proved showing that there are many equivalent definitions of local unit-additivity. Illustrative examples are sprinkled throughout.
\end{abstract}
\maketitle

\section{Introduction}

One way to state the Fundamental Theorem of Algebra is this.  Given a regular function $g$ that is nonconstant on the real line, there is some point $P$ in the complexification of the real line (i.e. a complex number) such that $g(P) =0$.  One does have to 
adjoin the square root of $-1$
in general; otherwise $g=X^2+4$ would be a counterexample.

In 
an introductory algebraic geometry course (see, e.g. \cite{Hart-AG, Per-AG}), one studies more general \emph{affine algebraic sets}.  That is, given a field $k$, one takes a collection $f_1, \ldots, f_s \in k[X_1, \ldots, X_n]$ of polynomials and studies both its vanishing set $V=V_k(f_1, \ldots, f_s)$ over $k$ and $\bar V = V_{k^\alg}(f_1, \ldots, f_s)$ over the algebraic closure $k^\alg$ of $k$.  One may then ask the same question: given a regular function $g$ that is not constant on $V$, is there some $P \in \bar V$ such that $g(P) = 0$?  If so, we will say that $V$ is \emph{fundamental} (over $k$), in the sense that it \emph{satisfies the fundamental theorem of algebra} over that field, in the extended sense above.  Note that the above is independent of the embedding of $V$ as an affine variety (see Proposition~\ref{pr;embedindependent}).

Throughout the dual histories of the subjects of commutative algebra and algebraic geometry, a common theme has been to find two apparently unrelated properties in the two subjects and show that they have a close correspondence. It is shown in this paper that such a correspondence exists between the above notion of a \emph{fundamental algebraic set} and that of a \emph{unit-additive ring} (see Theorem~\ref{thm:FTAiffua}).

Recall from \cite{nmeSh-unitadd}, where the notion was first investigated, that a ring $R$ is \emph{\ua} if a sum of units is always a unit or nilpotent. This notion has its roots in the much more restrictive notion of UU ring \cite{Cal-UU}, where every unit is congruent to 1 mod nilpotents, or even in the study of rings where $1$ is the only unit \cite{HeiRo-utriv, DJ-utriv}.  In fact, the study of unit groups is very important in number theory, where one can find literally hundreds of papers relating to groups of units of algebraic number fields.  See for example \cite[II.1: 1.20a]{FrTay-ANT} where the unit group of a Dedekind domain fits into an exact sequence of groups. It is also very important in algebraic geometry (see e.g. \cite{Fo-unitaffine}), where connections occur with Weil divisors, the Picard group, and the Brauer group when looking at the unit group of the coordinate ring of an affine variety.

A property of varieties or algebraic sets should admit a\emph{(n  affine) local} version (i.e. one that can be checked on an open affine cover) and a \emph{geometric} version (i.e. one where you tensor up to the algebraic closure first).  I  develop the notion of a \emph{locally fundamental} set (see Definition~\ref{def:FTA}), and show that it behaves in the ways that robust algebraic geometric properties are expected to behave (see e.g. Proposition~\ref{pr:reduceFTA}).  I also develop and show analogous robustness for \emph{locally} (see Definition~\ref{def:locua}), \emph{generically} (see Definition~\ref{def:genua}), and \emph{geometrically unit-additive} (see Section~\ref{sec:geoua}) rings, as well as \emph{(locally) unit-additive schemes} (see Definition~\ref{def:uascheme}).  I show that these properties behave well with respect to geometric and algebraic intuition (see Theorems~\ref{thm:locua}, \ref{thm:geom0}, and \ref{thm:geomp}, Corollary~\ref{cor:connua}, and Propositions~\ref{pr:schemelocua} and \ref{pr:genuafg}), but not perfectly (see Examples~\ref{ex:genuapowerser} and \ref{ex:notgeom}).  Moreover, the dictionary between fundamentality and unit-additivity extends to their local and geometric versions (see Theorem~\ref{thm:FTAiffua}).

In the final section, we investigate one more geometric tool.  Namely, given a property of varieties or schemes, one wants to be able to check it on irreducible components.  For rings, this amounts to going back and forth between $R$ and $R/\p$ for minimal primes $\p$.  I show that in the case of unit-additivity, this only works one way.  In particular, if $R/\p$ is unit-additive for all minimal primes $\p$, then so is $R$ (see Proposition~\ref{pr:modminprimes}).  But I give an example where the converse fails, even when $R$ is the coordinate ring for an affine variety over an algebraically closed field (see Example~\ref{ex:cantdescendcomponents}).

\section{Algebraic varieties that satisfy the Fundamental Theorem of Algebra}\label{sec:fund}

As there are some discrepancies in the literature over the terms in the case of a non-algebraically closed field, I record the following definitions here for use in the remainder of the paper:

\begin{defn}
Let $k$ be a field and $n \in \N$.  Then an \emph{affine algebraic set} $V \subseteq k^n$ (or an \emph{algebraic subset} of $k^n$) is a set of the form $V(Z) := V_k(Z) := \{p \in k^n \mid f(p) = 0 \text{ for all } f \in Z\}$, where $Z$ is some subset of $k[X_1, \ldots, X_n]$.

The \emph{Zariski topology} on $k^n$ is the topology whose closed sets are the algebraic subsets of $k^n$.  If $V \subseteq k^n$ is an affine algebraic set, then the Zariski topology on $V$ is the subspace topology induced by that of $k^n$.

For any subset $Y$ of $k^n$, $I(Y) := I_k(Y) := \{f \in k[X_1, \ldots, X_n] \mid f(p)=0 \text{ for all } p\in Y\}$.
\end{defn}

The following is presumably well-known.
\begin{lemma}\label{lem:closure}
Let $k$ be a field, let $V \subseteq k^n$ be an affine algebraic set, and let $I = I_k(V) \subseteq k[X_1, \ldots, X_n]$.  Let $W=V_{k^\alg}(Ik^\alg[X_1, \ldots, X_n]) = $ the $n$-tuples with entries in $k^\alg$ at which the generators of $I$ vanish.  Then $W$ is the Zariski closure of $V$ in $(k^\alg)^n$.
\end{lemma}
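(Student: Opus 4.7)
The plan is to prove both inclusions between $W$ and the closure $\overline{V}$ of $V$ in $(k^\alg)^n$.

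First I would establish the easy direction $\overline{V} \subseteq W$. Every element of $V$ has coordinates in $k \subseteq k^\alg$, so for any $v \in V$ and any $f \in I$ we have $f(v) = 0$, giving $V \subseteq W$. Since $W$ is cut out by polynomials, it is Zariski closed in $(k^\alg)^n$, hence contains $\overline{V}$.

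For the harder direction $W \subseteq \overline{V}$, I would use the characterization $\overline{V} = V_{k^\alg}(J)$ where $J := I_{k^\alg}(V) \subseteq k^\alg[X_1, \ldots, X_n]$, and show that $J \subseteq I \cdot k^\alg[X_1, \ldots, X_n]$. Given $g \in J$, I would pick a $k$-basis $\{c_i\}$ of the $k$-subspace of $k^\alg$ spanned by the coefficients of $g$, and write $g = \sum_i c_i g_i$ with $g_i \in k[X_1, \ldots, X_n]$. For any $v \in V$, the values $g_i(v)$ lie in $k$ because $v$ has $k$-coordinates, so the equation $0 = g(v) = \sum_i c_i g_i(v)$ combined with the $k$-linear independence of the $c_i$ forces $g_i(v) = 0$ for each $i$. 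Hence every $g_i$ lies in $I_k(V) = I$, and consequently $g = \sum_i c_i g_i \in I \cdot k^\alg[X_1, \ldots, X_n]$. It follows that $g$ vanishes on all of $W$, so $W \subseteq V_{k^\alg}(J) = \overline{V}$.

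The main obstacle is the second inclusion, and the crucial trick is the basis decomposition: extracting $k$-coefficient polynomials from a polynomial with $k^\alg$-coefficients by using $k$-linear independence in $k^\alg$. Once that linear-algebra observation is in place, the Nullstellensatz-style identification $\overline{V} = V_{k^\alg}(I_{k^\alg}(V))$ (which holds tautologically from the definition of Zariski closure, without needing algebraic closure hypotheses on $k$) finishes the argument.
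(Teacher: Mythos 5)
Your proof is correct. The paper itself offers no argument for this lemma (it is dismissed as ``presumably well-known''), so there is nothing to compare against; your write-up supplies the standard proof. Both halves check out: the inclusion $\overline{V}\subseteq W$ is immediate since $W$ is closed and contains $V$, and for the reverse inclusion your coefficient-decomposition trick correctly shows $I_{k^{\alg}}(V)\subseteq I\cdot k^{\alg}[X_1,\ldots,X_n]$ --- the key points being that points of $V$ have coordinates in $k$, so each $g_i(v)$ lies in $k$, and that the identification $\overline{V}=V_{k^{\alg}}(I_{k^{\alg}}(V))$ is tautological from the definition of the Zariski topology, requiring no Nullstellensatz.
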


Next note the following topological exercise.

\begin{lemma}
Let $C,T$ be topological spaces such that $C$ satisfies the  $(T_1)$ separation axiom.  Let $V$ be a subspace of $T$, and let $W$ be the closure of $V$ in $T$.  Let $g: T \ra C$ be a continuous map. \begin{enumerate}
    \item If $V$ is connected, so is $W$.
    \item $g$ is constant on $V$ if and only if it is constant on $W$.
    \item Assume $V$ has only finitely many connected components.  Then $g$ is locally constant on $V$ if and only if it is locally constant on $W$. 
\end{enumerate}
\end{lemma}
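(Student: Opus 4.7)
The plan is to handle the three parts in sequence, reusing the earlier parts and leaning on the two hypotheses ($C$ is $T_1$, and eventually that $V$ has only finitely many components). Throughout, write $W = \overline{V}$ for the closure of $V$ in $T$.

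For (1), I would give the standard argument that the closure of a connected subspace is connected: if $W = A \sqcup B$ with $A,B$ disjoint and closed in $W$, then $A \cap V$ and $B \cap V$ are disjoint closed subsets of $V$ with union $V$, so one of them, say $A \cap V$, is empty. Since $A$ is closed in $W$ and $W$ itself is the closure of $V$ in $W$, this forces $A = \emptyset$.

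For (2), the nontrivial direction is that constancy on $V$ implies constancy on $W$. Here is where the $T_1$ hypothesis on $C$ enters: if $g \equiv c$ on $V$, then $V \subseteq g^{-1}(c)$, and because $\{c\}$ is closed in $C$ and $g$ is continuous, $g^{-1}(c)$ is closed in $T$. Taking closures yields $W \subseteq g^{-1}(c)$, so $g \equiv c$ on $W$.

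For (3), the easy direction is restriction. For the other direction, decompose $V = V_1 \cup \dotsb \cup V_m$ into its finitely many connected components. Since $g$ is locally constant on $V$ and each $V_i$ is connected, $g$ is constant on each $V_i$, say with value $c_i$. Let $W_i$ be the closure of $V_i$ in $T$; by (1) each $W_i$ is connected, and by (2) $g \equiv c_i$ on $W_i$. Because closure commutes with finite unions, $W = W_1 \cup \dotsb \cup W_m$. Now fix any point $p \in W$ and let $S = \{i : p \in W_i\}$; the set $U := W \setminus \bigcup_{j \notin S} W_j$ is open in $W$, contains $p$, and is covered by $\{W_i\}_{i \in S}$. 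Since $g$ takes value $c_i$ on $W_i$ and $p$ lies in each $W_i$ with $i \in S$, the values $c_i$ for $i \in S$ all coincide (they all equal $g(p)$), so $g$ is constant on $U$. This exhibits a constancy neighborhood at every point of $W$.

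The main obstacle is the last point in (3): making sure that when several $W_i$ accumulate at a single $p$, their values of $g$ must agree, which is an automatic but easy-to-miss consequence of $g$ being a function together with part (2). Finiteness of the number of components is used twice — to get $W$ as a \emph{finite} union of the $W_i$ (ensuring $\bigcup_{j\notin S} W_j$ is closed in $T$) and to extract finitely many values $c_i$ — so without it the argument would not directly go through.
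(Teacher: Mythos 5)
Your proof is correct. Note that the paper states this lemma without proof, introducing it only as ``the following topological exercise,'' so there is no argument of the author's to compare against; your route --- closure of a connected set is connected, the $T_1$ hypothesis making $g^{-1}(c)$ closed so that $W\subseteq g^{-1}(c)$, and then in (3) passing to the closures $W_i$ of the finitely many components and carving out the open neighborhood $U=W\setminus\bigcup_{j\notin S}W_j$ --- is exactly the intended one, and your observation that the values $c_i$ for $i\in S$ all equal $g(p)$ is the one point that genuinely needs saying. One small wording slip in (1): having shown $A\cap V=\emptyset$, you justify $A=\emptyset$ by saying ``$A$ is closed in $W$,'' but closedness alone does not force a set disjoint from a dense set to be empty. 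What you actually need is either that $B$ is closed and contains $V$ (hence contains $\overline{V}^{\,W}=W$), or equivalently that $A$ is \emph{open} in $W$ (as the complement of the closed set $B$) and a nonempty open set must meet the dense set $V$. Since $A$ and $B$ partition $W$ into closed sets, $A$ is indeed open and the fix is immediate, but the justification should be stated correctly.
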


The next result follows directly from the previous two lemmas.
\begin{cor}\label{cor:VtoW}
Let $k$ be a field, $V \subseteq k^n$ an affine algebraic set, $I = I(V) \subseteq k[X_1, \ldots, X_n]$, and $W = V(Ik^\alg[X_1, \ldots, X_n])$. 
\begin{enumerate}
    \item If $V$ is (Zariski-)connected, then so is $W$, and
    \item For any $g \in k[X_1, \ldots, X_n]$, $g$ is constant (resp. locally constant) on $V$ if and only if it is constant (resp. locally constant) on $W$.
\end{enumerate}
\end{cor}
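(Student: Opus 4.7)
The plan is to read the corollary as a direct application of the preceding topological lemma. I take $T = (k^\alg)^n$ with its Zariski topology, view $V$ as a subspace of $T$ via the inclusion $k^n \hookrightarrow (k^\alg)^n$, and take $C = k^\alg$, also with its Zariski topology. Lemma~\ref{lem:closure} identifies $W$ with the closure of $V$ in $T$, which matches the hypothesis of the topological lemma. The target $C$ satisfies the $(T_1)$ axiom because every singleton $\{a\} = V(X-a)$ is Zariski-closed in $k^\alg$, and a polynomial $g \in k[X_1, \ldots, X_n]$ defines a Zariski-continuous map $T \to C$ since the preimage of $\{a\}$ is cut out by the single polynomial $g - a$.

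With these identifications in place, part~(1) of the corollary is immediate from part~(1) of the topological lemma, and the constant version of part~(2) is immediate from part~(2) of the topological lemma. For the locally constant version I would invoke part~(3) of the lemma, which requires $V$ to have only finitely many connected components; this holds because $k^n$ is a Noetherian topological space (by the Hilbert basis theorem, the descending chain condition on Zariski-closed subsets reduces to the ascending chain condition on ideals of $k[X_1, \ldots, X_n]$), so the subspace $V$ is Noetherian and hence has only finitely many irreducible, and a fortiori finitely many connected, components.

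The one mildly delicate point, and the only place I would expect to have to think rather than simply quote, is verifying that the Zariski topology $V$ inherits as a subspace of $k^n$ agrees with the subspace topology it carries as a subspace of $(k^\alg)^n$, so that the hypothesis ``$V$ connected'' in the corollary really matches the ``$V$ connected'' in the topological lemma. The nontrivial direction is clear after picking a $k$-basis $\{e_i\}$ of $k^\alg$: a polynomial $f = \sum_\alpha c_\alpha X^\alpha$ with $c_\alpha \in k^\alg$ can be written as $\sum_i e_i f_i$ with $f_i \in k[X_1, \ldots, X_n]$, and for $p \in k^n$ one has $f(p) = 0$ if and only if all $f_i(p) = 0$; hence each $k^\alg$-polynomial condition on $k$-points unpacks as finitely many $k$-polynomial conditions. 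After this compatibility check, both parts of the corollary fall out of the topological lemma with no further work, consistent with the author's remark that the result is a direct consequence of the two preceding lemmas.
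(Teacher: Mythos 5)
Your proposal is correct and follows exactly the route the paper intends: the paper gives no written proof beyond the remark that the corollary ``follows directly from the previous two lemmas,'' and your argument is precisely the fleshed-out version of that, applying Lemma~\ref{lem:closure} to identify $W$ with the closure of $V$ and then invoking the topological lemma with $T=(k^\alg)^n$ and $C=k^\alg$. The extra checks you supply (the $(T_1)$ property of $k^\alg$, continuity of polynomial maps, finiteness of the set of connected components via Noetherianity, and the agreement of the two subspace topologies on $V$) are all correct and are exactly the details the paper leaves implicit.
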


\begin{defn}\label{def:FTA}
Let $k$ be a field and let $V \subseteq k^n$ be an affine algebraic set.  Let $W$ be the Zariski closure of $V$ in $(k^\alg)^n$.  Say that $V$ is \emph{fundamental} (\emph{over} $k$) if for any $f \in k[X_1, \ldots, X_n]$ that is not constant on $V$, there is some $p\in W$ with $f(p)=0$.

Say that $V$ is \emph{locally} fundamental if for any $f \in k[X_1, \ldots, X_n]$ that is not \emph{locally} constant on $V$, there is some $p\in W$ with $f(p)=0$.  Note that any fundamental set is locally fundamental.
\end{defn}

\begin{rem}
Some remarks are in order. 
\begin{enumerate}
\item The \emph{classical} version of the Fundamental Theorem of Algebra (the one proved by Gauss and d'Alembert in the 18th century) says that for any polynomial $f$ in one variable with coefficients in $\R$, if $f$ is not constant, then $f$ admits a root in $\C$.  In other words, $\R$ is a fundamental algebraic set (over itself).
\item The \emph{more commonly quoted} version of the Fundamental Theorem of Algebra, that $\C$ is an algebraically closed field, says that $\C$ is a fundamental set.
\item Hilbert's weak Nullstellensatz implies that $k^n$ is fundamental over $k$, for any field $k$ and any $n\in \N$.  Indeed, let $f \in R=k[X_1, \ldots, X_n]$ be nonconstant on $k^n$.  Then $f \notin k$, so for degree reasons we have $1\notin fR$.  Then by the weak Nullstellensatz, $f$ vanishes at some point in $(k^\alg)^n$.  Since $k^n$ is Zariski-dense in $(k^\alg)^n$, it follows that $k^n$ is fundamental over $k$. 
\item Every fundamental set is locally fundamental, but the converse fails.  For instance, consider the algebraic set $S=\{0,1\} = V_k(X^2-X) \subseteq k$, where $k$ is a field of at least three elements. Since $S$ is discrete, \emph{every} function $S \ra k$ is locally constant, whence $S$ is locally fundamental. However, consider the polynomial $g(X) = X-c$, where $c \in k \setminus S$, and the corresponding polynomial function $f$ on $S$.  Then $f$ is nonconstant and nonvanishing on $S$, and $S$ is Zariski-closed in $k^\alg$ (being finite), so $S$ is not fundamental.
\end{enumerate}
\end{rem}

\begin{lemma}\label{lem:FTAconnlocal}
Let $V \subseteq k^n$ be a connected affine algebraic set.  Then $V$ is fundamental if and only if it is locally fundamental.
\end{lemma}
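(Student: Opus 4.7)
The forward implication is immediate: a constant function on $V$ is certainly locally constant, so any $f$ which fails to be locally constant a fortiori fails to be constant. This is recorded already in Definition~\ref{def:FTA} and does not use connectedness. The real content is the reverse direction, so I would devote the proof to showing that if $V$ is connected and locally fundamental, then it is fundamental.

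The strategy is to reduce ``nonconstant'' to ``not locally constant'' via connectedness, and then quote local fundamentality. Concretely, take $f \in k[X_1,\ldots,X_n]$ and suppose $f$ is not constant on $V$; I want to produce a point of $W$ at which $f$ vanishes. By local fundamentality, it suffices to show that $f$ is not locally constant on $V$. So I would argue the contrapositive: if $f$ were locally constant on $V$, then for each $c \in k$ the set $U_c := \{p \in V : f(p) = c\}$ would be open in $V$, and since the $U_c$ partition $V$, each $U_c$ would also be closed (as the complement of the open set $\bigcup_{c' \ne c} U_{c'}$). Because $V$ is connected, exactly one $U_c$ is nonempty, forcing $f$ to be constant on $V$, a contradiction.

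The only step that requires any thought is the standard topological fact that a locally constant function on a connected space is constant, which I have just sketched above in the form needed here. Everything else is unwinding definitions, so the proof should be only a few lines. No obstacles are anticipated; the role of ``connected'' in the hypothesis is precisely to make this reduction go through, as the discrete two-point example in the preceding remark demonstrates that without connectedness the equivalence fails.
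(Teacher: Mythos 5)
Your proposal is correct and matches the paper's proof, which likewise reduces the lemma to the standard fact that a locally constant function on a connected space is constant (the paper simply cites this fact rather than reproving it). Nothing further is needed.
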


\begin{proof}
The result holds because a function on a connected topological space is constant if and  only if it is locally constant \cite{mse-lcconn}.
\end{proof}

\begin{prop}\label{pr:reduceFTA}
Let $V \subseteq k^n$ be an affine algebraic set.  Let $V = V_1 \coprod \cdots \coprod V_t$ be a decomposition of $V$ into disjoint closed subsets.  Then $V$ is a locally fundamental set if and only if $V_i$ is locally fundamental for all $1\leq i \leq t$.
\end{prop}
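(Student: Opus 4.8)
The plan is to prove the two implications separately, working with the coordinate rings $A = k[X_1,\ldots,X_n]/I(V)$ and $A_i = k[X_1,\ldots,X_n]/I(V_i)$ and writing $W_i$ for the Zariski closure of $V_i$ in $(k^\alg)^n$; since Zariski closure commutes with finite unions, $W = W_1 \cup \cdots \cup W_t$. The key preliminary observation is that each $V_i$ is \emph{clopen} in $V$: it is closed by hypothesis, and open because its complement $\bigcup_{j\ne i} V_j$ is closed. Hence a polynomial $f$ is locally constant on $V$ if and only if $f|_{V_i}$ is locally constant for every $i$. This immediately gives the ``if'' direction: if each $V_i$ is locally fundamental and $f$ fails to be locally constant on $V$, then $f|_{V_{i_0}}$ fails to be locally constant for some $i_0$, so there is $p\in W_{i_0}\subseteq W$ with $f(p)=0$, and $V$ is locally fundamental.

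For the ``only if'' direction I would first produce, for each $i$, a polynomial $e_i \in k[X_1,\ldots,X_n]$ with $e_i \equiv 1 \pmod{I(V_i)}$ and $e_i \equiv 0 \pmod{I(V_j)}$ for $j\ne i$ — i.e.\ lifts of the orthogonal idempotents of $A$ attached to the decomposition. Granting these, fix $i$, say $i=1$, let $V$ be locally fundamental, and let $g\in k[X_1,\ldots,X_n]$ be not locally constant on $V_1$. Put $f := g e_1 + (1-e_1)$. On $V_1$ one has $f\equiv g$, so $f$ is not locally constant on $V_1$, hence not locally constant on $V$; local fundamentality of $V$ yields $p\in W$ with $f(p)=0$. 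Pairwise comaximality of the $I(V_j)$ (which is exactly what the $e_j$ encode) persists after extension to $k^\alg[X_1,\ldots,X_n]$, so the $W_j$ are pairwise disjoint and $W = W_1\sqcup\cdots\sqcup W_t$; thus $p$ lies in exactly one $W_j$. If $j\ne 1$, then $f\equiv 1 \pmod{I(V_j)}$, so $f\equiv 1$ on $W_j$ and $f(p)=1\ne 0$, a contradiction. Hence $p\in W_1$, and since $f-g = (g-1)(e_1-1)\in I(V_1)$ vanishes on $W_1$, we get $g(p)=f(p)=0$. So $V_1$ is locally fundamental, completing the argument modulo the $e_i$.

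The step I expect to be the main obstacle is the construction of the separating polynomials $e_i$, i.e.\ the pairwise comaximality $I(V_i)+I(V_j)=k[X_1,\ldots,X_n]$ for $i\ne j$ — equivalently, disjointness of the closures $W_i$ in $(k^\alg)^n$. When $k$ is algebraically closed this is immediate: $V_i\cap V_j=\emptyset$ forces comaximality by the weak Nullstellensatz, and then the idempotents exist by the Chinese Remainder Theorem. Over a general field this is the delicate point, since the Zariski closures of two disjoint closed subsets of $k^n$ need not be disjoint in $(k^\alg)^n$; establishing the comaximality in the situation at hand (where the $V_i$ assemble to a single affine algebraic set) — or reducing to a setting where it holds — is where the real work lies, and it is the only place the base field being non‑closed intervenes. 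Once the $e_i$ are available, everything else is the bookkeeping carried out above.
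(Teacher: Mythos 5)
Your ``if'' direction is correct and is essentially the paper's argument. The ``only if'' direction, however, has a genuine gap at precisely the point you flag: the existence of the separating polynomials $e_i$, equivalently the pairwise comaximality $I(V_i)+I(V_j)=k[X_1,\ldots,X_n]$. You leave this open, and in fact it is \emph{false} in the generality of the proposition. Take $k=\R$, $V_1=V_\R(Y)$ (the $x$-axis) and $V_2=V_\R(Y-X^2-1)$ (a parabola missing the $x$-axis over $\R$). These are disjoint closed subsets of $\R^2$ whose union is the affine algebraic set $V=V_\R\bigl(Y(Y-X^2-1)\bigr)$, yet $I(V_1)+I(V_2)=(Y,\,X^2+1)$ is a proper ideal (the quotient is $\C$), and correspondingly $W_1\cap W_2=\{(\pm i,0)\}\neq\emptyset$. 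So the idempotents you want do not exist, $W$ need not be the disjoint union of the $W_j$, and the step ``$p$ lies in exactly one $W_j$'' fails. The gap is therefore not merely an unfinished verification: the auxiliary claim you hope to establish is wrong as stated, and the only-if direction needs a different mechanism for forcing the zero $p$ of the modified function into $W_1$ than a Chinese Remainder lift.

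For what it is worth, the paper's own proof takes the same route and asserts $R=I(V_i\cap V_j)=I(V_i)+I(V_j)$ without justification, which is exactly the identity the example above refutes (in general one only has $I(V_i)+I(V_j)\subseteq I(V_i\cap V_j)$). So you have correctly isolated the delicate point of this approach; but isolating it is not the same as closing it, and the example shows it cannot be closed in the form you (and the paper) rely on. A correct treatment of the only-if direction must either impose hypotheses guaranteeing that the closures $W_i$ stay disjoint, or argue directly about where a zero of $g$ itself can lie, rather than manufacturing an $f$ that is identically $1$ on the other pieces.
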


\begin{proof}
Let $W$ (resp. $W_i$) be the Zariski closure of $V$ (resp of $V_i$, for each $1\leq i \leq t$) in $(k^\alg)^n$.  Note that $W = W_1 \cup \cdots \cup W_t$.

Suppose all the $V_i$ are locally fundamental.  Let $f \in k[X_1, \ldots, X_n]$ such that $f$ is not locally constant on $V$.  Then there is some $1 \leq i \leq t$ such that $f$ is not locally constant on $V_i$. Since $V_i$ is locally fundamental, there is some $p \in W_i$ (hence $p \in W$) such that $f(p)=0$.

Conversely, suppose $V$ is locally fundamental.  For any $i\neq j$, we have $R = I(\emptyset) = I(V_i \cap V_j) = I(V_i) + I(V_j)$, and $I(V) = I(\bigcup_{j=1}^t V_j) = \bigcap_{j=1}^t I(V_j)$.  Thus, by the Chinese Remainder theorem for rings, we have a ring isomorphism $\phi: k[X_1, \ldots, X_n] / I(V) \rightarrow \prod_{i=1}^t k[X_1, \ldots, X_n] / I(V_i)$ given by $\bar h \mapsto (\bar h, \bar h, \ldots, \bar h)$.  Now, choose $i$ with $1\leq i \leq t$, and let $f \in k[X_1, \ldots, X_n]$ such that $F$ is not locally constant on $V_i$, where $F: k^n \ra k$ is the function induced by the polynomial $f$.  Thus there is some $g\in k[X_1, \ldots, X_n]$ such that $\phi(\bar g) = (1,1,\ldots, 1,\bar f, 1, \ldots, 1)$, with $1$ in every spot except the $i$th, and $\bar f$ in the $i$th spot.  Then $F|_{V_i} = G|_{V_i}$ as functions, where $G: k^n \ra k$ is the function induced by the polynomial $g$.  Let $\tilde F, \tilde G: (k^\alg)^n \ra k^\alg$ be the functions induced by the polynomials $f,g$ respectively on $(k^\alg)^n$.  Since $F$ is not locally constant on $V_i$, neither is $G$, whence $G$ is not locally constant on $V$ either.  Since $V$ is locally fundamental, there is some $p \in W$ such that $\tilde G(p)=0$.  But for any $j \neq i$, we have $G|_{V_j} = 1$, and hence since $\tilde G$ is continuous, $G|_{V_j} = \tilde G|_{V_j}$, and $W_j$ is the Zariski-closure of $V_j$ in $(k^\alg)^n$, we have $\tilde G|_{W_j} =1$.  For the same reason, $\tilde F|_{W_i} = \tilde G|_{W_i}$.  Hence $p \in W_i$, so $0 = \tilde G(p) = \tilde F(p)$.  Thus, $V_i$ is fundamental.
\end{proof}

\begin{prop}\label{pr;embedindependent}
Let $V$ be an affine variety over an infinite field $k$.  Let $S \subseteq k^n$ be an algebraic set such that $S \cong V$ as affine $k$-varieties.  Then $S$ is fundamental (resp. locally fundamental) if and only if for any regular function $g: V \ra \A^1_k$ such that $g \otimes_k k^\alg$ factors through the inclusion map $\G_m(k^\alg) \into \A^1_{k^\alg}$, $g$ is constant (resp. locally constant).  In particular, (local) fundamentality is independent of embedding.
\end{prop}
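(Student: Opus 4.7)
The plan is to recognize that both sides of the biconditional are translations of the same algebraic condition on the coordinate ring of $V$, so the proof amounts to setting up two matching dictionaries and then unwinding definitions.

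Write $R = k[X_1,\dotsc,X_n]/I_k(S)$ for the coordinate ring of $S$. Under the isomorphism $S \cong V$, regular functions $g \colon V \to \A^1_k$ correspond bijectively to elements $\bar f \in R$, represented by polynomials $f \in k[X_1,\dotsc,X_n]$; and $V \times_k \Spec(k^\alg)$ has coordinate ring $R \otimes_k k^\alg$. By Lemma~\ref{lem:closure}, $W$ is the $k^\alg$-algebraic set cut out by $I_k(S) \cdot k^\alg[X_1,\dotsc,X_n]$, so the reduced coordinate ring of $W$ is a nilpotent quotient of $R \otimes_k k^\alg$.

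The first dictionary concerns geometric nonvanishing: by the Nullstellensatz over $k^\alg$, the polynomial $f$ has no zero on $W$ if and only if $\bar f$ is a unit in the reduced coordinate ring of $W$; since units lift through nilpotent quotients, this is equivalent to $\bar f \otimes 1$ being a unit in $R \otimes_k k^\alg$, which is precisely the assertion that $g \otimes_k k^\alg$ factors through $\G_m(k^\alg) \into \A^1_{k^\alg}$. The second dictionary concerns constancy: using that $k$ is infinite, the $k$-valued function on $S$ induced by $\bar f$ is constant (respectively locally constant, in the Zariski topology on $S$) if and only if $g$ is constant (respectively locally constant) as a regular function on $V$, a condition that is intrinsic to $V$ and independent of the embedding $S \subseteq k^n$.

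With these two translations, the biconditional becomes tautological: $S$ is (locally) fundamental if and only if every polynomial in $k[X_1,\dotsc,X_n]$ with no zero on $W$ is (locally) constant on $S$, if and only if every regular function $g$ on $V$ whose $k^\alg$-base change factors through $\G_m(k^\alg)$ is (locally) constant on $V$. The ``In particular'' clause is then immediate: since the right-hand condition mentions only $V$ and its regular functions, applying the biconditional to any two embeddings of $V$ shows that (local) fundamentality is independent of the embedding. The main obstacle is the first dictionary, namely handling the passage between the possibly non-reduced base change $R \otimes_k k^\alg$ and the reduced coordinate ring of $W$; this is harmless because units both survive, and are detected by, the nilpotent quotient in either direction.
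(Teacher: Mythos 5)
Your proof is correct, but it takes a genuinely different route from the paper. The paper argues directly at the level of points and functions: it fixes the isomorphism $\phi\colon S\to V$, extends it to $\bar\phi\colon \bar S\to V\otimes_k k^\alg$, chases a commutative diagram to show that nonvanishing of a polynomial on $\bar S$ corresponds exactly to the base-changed regular function factoring through $\G_m(k^\alg)$, and then handles the \emph{locally} fundamental case separately by decomposing $S$ into connected closed pieces and invoking Proposition~\ref{pr:reduceFTA} and Lemma~\ref{lem:FTAconnlocal} to reduce to the connected (hence fundamental) case. You instead translate everything into the coordinate ring: nonvanishing on $W$ becomes, via the Nullstellensatz over $k^\alg$ (i.e.\ the Jacobson property of finitely generated $k^\alg$-algebras) and the fact that units lift through the nil quotient $R\otimes_k k^\alg \onto (R\otimes_k k^\alg)_{\mathrm{red}}$, the condition that $\bar f\otimes 1$ is a unit, which is exactly the factorization through $\G_m(k^\alg)$; constancy and local constancy transfer across the isomorphism because it is a homeomorphism identifying regular functions. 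This buys you a uniform treatment of the plain and local versions (no component decomposition needed) and makes the embedding-independence visibly tautological; what it costs is that you are essentially front-loading the ring-theoretic dictionary that the paper postpones to the proof of Theorem~\ref{thm:FTAiffua}, whereas the paper keeps this proposition purely geometric. One small presentational point: you cite the hypothesis that $k$ is infinite for your ``second dictionary,'' but the identification of (locally) constant functions across the isomorphism $S\cong V$ does not actually use it; the hypothesis is really there so that the classical notions of affine variety, coordinate ring, and regular function line up as you assume in setting up the correspondence between elements of $R$ and functions on $S$.
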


\begin{proof}
Let $\phi: S \arrow{\cong} V$ be an isomorphism of varieties.  If $\bar{S}$ is the Zariski closure of $S$ in $(k^\alg)^n$, then we have an isomorphism $\bar{\phi}: \bar{S} \arrow{\cong} V \otimes_k k^\alg$.

Suppose $S$ is fundamental.   Let $g: V \ra \A^1_k$ be a regular map such that $g \otimes_k k^\alg$ factors through $\G_m(k^\alg)$.  Let $h \in k[X_1,\ldots, X_n]$ such that for all $s\in S$, $h(s) = g(\phi(s))$.  We have a commutative diagram \[
\xymatrix{
\bar{S} \ar[r]^-{\bar\phi}_-{\cong} & V \otimes_k k^\alg \ar[rr]^-{g \otimes_k k^\alg} \ar[dr]_-f & & \A^1_{k^\alg}.\\
& & \G_m(k^\alg) \ar@{^{(}->}[ur]
}
\]
When $h$ is considered as an element of $k^\alg[X_1, \ldots, X_n]$, then as a function on $\bar S$, it is given by $(g \otimes_k k^\alg) \circ \bar \phi$.  Thus, for any $p \in \bar S$, $h(p) \in \G_m(k^\alg) = k^\alg \setminus \{0\}$ by the above diagram. Thus by fundamentality of $S$, it follows that $h$ is constant on $S$, whence $g$ is constant on $V$.

On the other hand, suppose $S$ is not fundamental.  Then there is some $h\in k[X_1, \ldots, X_n]$ that is nonconstant on $S$, but nonvanishing on $\bar S$.  Set $g := h|_S \circ \phi^{-1}: V \ra \A^1_k$. Then $g$ is nonconstant.  Also, $g \otimes_k k^\alg = h|_{\bar S} \circ \bar \phi^{-1}: V \otimes_k k^\alg \ra \A^1_{k^\alg}$ is nonvanishing. Hence it factors through $\G_m(k^\alg)$.

Now suppose $S$ is locally fundamental.  Write $S = S_1 \coprod \cdots \coprod S_t$, where each $S_i$ is closed and connected.  Then by Proposition~\ref{pr:reduceFTA} and Lemma~\ref{lem:FTAconnlocal}, each $S_i$ is fundamental.  The isomorphism $\phi$ decomposes into a list of isomorphisms $\phi_i: S_i \arrow{\cong} V_i$.  Let $g: V \ra \A^1_k$ be a regular function such that $g \otimes_k k^\alg$ factors through $\G_m(k^\alg)$.  Let $g_i$ be the restriction of $g$ to $V_i$, for each $i$.  Then $g_i \otimes_k k^\alg$ factors through $\G_m(k^\alg)$, so by the first parts of the proof, $g_i$ is constant.  Hence, $g$ is locally constant.

Finally, suppose the condition of the Proposition on local constancy holds for $V$.  Then the corresponding constancy condition holds on each $V_i$, since $V_i$ is connected.  Then by the first parts of the proof, each $S_i$ is fundamental.  By Proposition~\ref{pr:reduceFTA}, $S$ is locally fundamental. 
\end{proof}

This then provides a definition of (locally) fundamental algebraic varieties, or even schemes, over a field $k$.  Just remove the word affine (and replace the word `variety' with `scheme') from the condition above.

\section{Local and generic unit additivity}

\subsection{Background on unit-additivity}
A ring $R$ is said to be \emph{unit-additive} if for any pair of units $u,v$ of $R$ such that $u+v$ is not nilpotent, $u+v$ is a unit (hence the zero ring is vacuously unit-additive).  In \cite{nmeSh-unitadd}, the author and Jay Shapiro introduced and explored the property of unit-additivity, making connections to semigroup rings, irreducible varieties, and elliptic curves.  We also introduced the new invariant of \emph{unit dimension}, which measures how far a domain is from being unit-additive, and the \emph{unit-additive closure} of an integral domain (a universal unit-additive localization).

In this section, we enhance the connection of unit-additivity to algebraic geometry by introducing the related notions of \emph{local} and \emph{generic} unit-additivity, proving along the way some fundamentals of how these properties behave.

Before commencing on the new work, I collect here for ease of reference some results from \cite{nmeSh-unitadd} that I will use in the sequel.

\begin{prop}[{\cite[Proposition 2.18]{nmeSh-unitadd}}]\label{pr:modoutnilpotents}
    Let $R$ be a commutative ring and $I$ an ideal consisting of nilpotent elements.  Then $R$ is \ua\ if and only if $R/I$ is \ua.
\end{prop}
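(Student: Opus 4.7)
The plan is to reduce the statement to two standard lifting facts, since the hypothesis that every element of $I$ is nilpotent forces $I \subseteq \func{nil}(R)$, and consequently $I$ is contained in the Jacobson radical of $R$. The two facts I want to record up front are: (i) an element $u \in R$ is a unit if and only if its image $\bar u \in R/I$ is a unit, and (ii) an element $r \in R$ is nilpotent if and only if $\bar r \in R/I$ is nilpotent. For (i), one direction is trivial, and for the other, if $\bar u \bar v = \bar 1$ then $uv = 1 + x$ for some $x \in I$; since $x$ is nilpotent, $1+x$ is a unit, so $u$ is a unit. For (ii), again one direction is trivial, and if $\bar r^n = \bar 0$ then $r^n \in I$, and since $I$ consists of nilpotent elements, some power of $r^n$ vanishes, so $r$ is nilpotent.

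Given these two facts, both directions become formal. For the forward direction, suppose $R$ is \ua, and take units $\bar u, \bar v \in R/I$ with $\bar u + \bar v$ not nilpotent. By (i), lift to units $u,v \in R$, and by the contrapositive of (ii), $u+v$ is not nilpotent in $R$. Unit-additivity of $R$ then makes $u+v$ a unit, so $\bar u + \bar v$ is a unit in $R/I$ by (i) again.

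For the reverse direction, suppose $R/I$ is \ua, and take units $u,v \in R$ with $u+v$ not nilpotent. By (i), $\bar u, \bar v$ are units in $R/I$, and by (ii), $\bar u + \bar v$ is not nilpotent. Unit-additivity of $R/I$ then gives that $\bar u + \bar v$ is a unit, and applying (i) once more shows that $u+v$ is a unit in $R$.

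There is no real obstacle here; the only step that uses the hypothesis in a nontrivial way is (i), and it uses only that nilpotents lie in the Jacobson radical. Everything else is a direct translation along the quotient map, with nilpotency serving as the natural "exception" class on both sides. The proof is therefore essentially a two-line verification once the lifting lemmas are in hand.
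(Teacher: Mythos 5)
Your proof is correct; the two lifting facts (units lift along $R\to R/I$ because $I$ consists of nilpotents, and nilpotency descends and lifts along the same map) are exactly what is needed, and both directions follow formally as you say. The paper only cites this result from \cite{nmeSh-unitadd} without reproducing a proof, and your argument is the standard one for it.
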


Recall \cite{Cal-UU} that a ring $R$ is \emph{UU} if for any unit $u$ of $R$, $u-1$ is nilpotent.  It is evident that any such ring is \ua\ (see \cite[Proposition 2.12]{nmeSh-unitadd}). On the other hand, we have the following partial converse for products.

\begin{prop}[{\cite[Proposition 2.13]{nmeSh-unitadd}}]\label{pr:uaprod}
    Let $R = S\times T$, where $S,T$ are nonzero commutative rings.  The following are equivalent: \begin{enumerate}
        \item $R$ is \ua.
        \item $S$ and $T$ are UU.
        \item $R$ is UU.
    \end{enumerate}
\end{prop}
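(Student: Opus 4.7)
The plan is to prove the cycle $(3) \Rightarrow (1) \Rightarrow (2) \Rightarrow (3)$. The implication $(3) \Rightarrow (1)$ is immediate from the general fact, noted just before the proposition, that any UU ring is unit-additive; I would simply cite this.

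For $(2) \Rightarrow (3)$, I would argue directly: every unit of $R = S \times T$ has the form $(s,t)$ with $s \in S^\times$ and $t \in T^\times$. Using the UU hypothesis on each factor, both $s-1$ and $t-1$ are nilpotent. Then $(s,t) - (1,1) = (s-1, t-1)$ is nilpotent in $R$, since we may raise to the maximum of the two nilpotency exponents; finiteness of the product is essential here. Hence $R$ is UU.

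The substantive direction is $(1) \Rightarrow (2)$. Given any $s \in S^\times$, I would introduce the pair of units $u := (s, 1)$ and $v := (-1, -1)$ in $R$; their sum is $(s-1,\, 0)$. Because $T$ is nonzero, $0$ is not a unit of $T$, so $u+v$ fails to be a unit of $R$. Unit-additivity of $R$ then forces $u+v$ to be nilpotent, and since nilpotency in a finite product is componentwise, we obtain that $s-1$ is nilpotent in $S$. Thus $S$ is UU, and the argument for $T$ is symmetric (swap the roles of the two factors).

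I do not expect any serious obstacle; the only mildly delicate step is choosing the right pair of units in $(1) \Rightarrow (2)$. The choice $v = (-1,-1)$, rather than something like $(-1, 1)$, is important because one has no a priori control over $2 \in T$, whereas $0 \in T$ is guaranteed to be a non-unit in a nonzero ring. This is exactly what lets the unit-additivity hypothesis on $R$ transfer information to the factor $S$.
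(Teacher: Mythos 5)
Your proof is correct and complete; note that the paper itself does not prove this proposition but merely cites it from \cite[Proposition 2.13]{nmeSh-unitadd}, so your argument supplies a proof where the paper gives none. The cycle $(3)\Rightarrow(1)\Rightarrow(2)\Rightarrow(3)$ is the natural route, and your key step --- choosing $v=(-1,-1)$ so that $u+v=(s-1,0)$ has a guaranteed non-unit second coordinate, forcing nilpotence of $s-1$ via unit-additivity --- is exactly the right device.
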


It is useful to characterize unit-additivity in various ways.

\begin{prop}[See {\cite[Proposition 2.1]{nmeSh-unitadd}}]\label{pr:characterizeua}
Let $R$ be a nonzero ring.  The following are equivalent: \begin{enumerate}
    \item $R$ is \ua.
    \item For any unit $u$ of $R$, $u+1$ is either a unit or nilpotent.
    \item For any unit $u$ of $R$, $u-1$ is either a unit or nilpotent.
    \item (If $R$ is reduced) The set $U(R) \cup \{0\}$, with structure inherited from $R$, is a field, called the \emph{field of units} of $R$.
\end{enumerate}
\end{prop}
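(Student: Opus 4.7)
The plan is to establish the cycle $(1) \Rightarrow (2) \Rightarrow (1)$ and the symmetric equivalence $(2) \Leftrightarrow (3)$ for any nonzero $R$, and then to handle $(1) \Leftrightarrow (4)$ separately under the standing reducedness assumption.

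For $(1) \Rightarrow (2)$ I would simply specialize $v = 1$ in the definition of unit-additivity. For the converse $(2) \Rightarrow (1)$, the key renormalization is to factor $u + v = v(uv^{-1} + 1)$: since $v$ is a unit, multiplication by $v$ preserves both the nilpotence and the unit status, so $uv^{-1} + 1$ is not nilpotent, and (2) applied to the unit $uv^{-1}$ forces it to be a unit, whence $u+v$ is a unit. The equivalence $(2) \Leftrightarrow (3)$ follows by substituting $-u$ for $u$ and using the identities $-u \pm 1 = -(u \mp 1)$, together with the obvious fact that negation preserves both the unit and the nilpotent properties.

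For $(1) \Rightarrow (4)$, set $F := U(R) \cup \{0\}$ and verify directly that $F$ is a subring: closure under multiplication and under $u \mapsto -u$ is immediate, and closure under addition is where both unit-additivity and reducedness enter. Indeed, for units $u, v$, clause (1) makes $u + v$ either a unit or nilpotent, and reducedness upgrades the nilpotent alternative to $0$, so $u + v \in F$. The cases involving a zero summand are trivial. Every nonzero element of $F$ is then invertible in $F$ by construction, so $F$ is a field. Conversely, $(4) \Rightarrow (1)$ is immediate: for units $u, v$, closure of $F$ under addition forces $u + v \in F$, and then either $u + v = 0$ (hence nilpotent) or $u + v$ is a unit.

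No step presents a real obstacle here; the proposition is essentially a formal unpacking of the definition. The only point worth flagging is the role of reducedness in $(1) \Rightarrow (4)$: without it a nilpotent-but-nonzero sum of units need not land in $U(R) \cup \{0\}$, which is precisely why clause (4) carries that standing hypothesis.
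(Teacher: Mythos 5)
Your proposal is correct. Note that the paper itself only argues $(2)\Leftrightarrow(3)$ directly (via negation, exactly as you do) and defers $(1)\Leftrightarrow(2)\Leftrightarrow(4)$ to the cited reference [ES23, Proposition 2.1]; your renormalization $u+v=v(uv^{-1}+1)$ for $(2)\Rightarrow(1)$ and your verification that $U(R)\cup\{0\}$ is a subfield under reducedness are the standard arguments that fill in precisely what the paper outsources to that citation.
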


\begin{proof}
The equivalence of (1) and (2) (and also (4), when $R$ is reduced) is part of \cite[Proposition 2.1]{nmeSh-unitadd}.  The equivalence of (2) and (3) follows from the fact that for any $x\in R$, $x$ is a unit (resp. nilpotent) if and only if $-x$ is
\end{proof}

The first connection between fundamentality and unit-additivity was made via the following two theorems from \cite{nmeSh-unitadd}.  I  generalize both of them in Section~\ref{sec:uafund}.

\begin{thm}[{\cite[Theorem 4.1]{nmeSh-unitadd}}]\label{thm:uafgdomain}
Let $R$ be an integral domain finitely generated over a field $k$.  Let $A=k[t,1/t]$. Let $L$ be the integral closure of $k$ in $R$; then $L$ is a finite extension field of $k$.  If $R$ is unit-additive, then the only $k$-algebra maps from $A$ to $R$ send $t$ to some element of $L$; none of these are injective.  On the other hand, if $R$ is \ua, then there is an injective $k$-algebra map $A \ra R$.
\end{thm}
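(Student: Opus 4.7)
The theorem breaks into three claims: (i) $L$ is a finite field extension of $k$; (ii) if $R$ is \ua, every $k$-algebra map $\phi: A \to R$ sends $t$ into $L$, and none is injective; (iii) (reading ``on the other hand'' as ``if $R$ is not \ua'') otherwise, some $k$-algebra map $A \to R$ is injective. My plan is to dispatch (i) as a warm-up, then attack (ii) by amplifying \ua\ to a statement about all polynomial values and invoking the Nullstellensatz, and finally handle (iii) by extracting a transcendental unit from a witness of non-\ua.

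\emph{Claim (i).} For $x \in L$, since $x$ is algebraic over $k$, the subring $k[x] \subseteq R$ is a finite-dimensional $k$-algebra that is an integral domain, hence a field; so $x^{-1} \in k[x] \subseteq L$, showing $L$ is a field. I would then embed $L$ into the algebraic closure of $k$ inside $\Frac(R)$, which is a finite extension of $k$ (standard, since $\Frac(R)$ is a finitely generated field extension of $k$); hence $[L:k] < \infty$.

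\emph{Claim (ii).} A $k$-algebra map $\phi : A \to R$ is determined by the unit $u := \phi(t) \in R^*$, and once I show $u \in L$, then $\phi(A) \subseteq L$ is finite-dimensional over $k$, so $\phi$ cannot be injective (as $A$ is infinite-dimensional). I would proceed by contradiction, assuming $u$ is transcendental over $k$. First, I would upgrade the two-element \ua\ hypothesis to arbitrary finite sums: a short induction using that a unit plus a nilpotent is always a unit shows that any finite sum of units in $R$ is a unit or nilpotent, so in the domain $R$ any nonzero sum of units is a unit. Hence, for every $p \in k[X]$ with $p(0) \ne 0$, the element $p(u)$ is a nonzero (by transcendence) sum of units $a_i u^i$, so $p(u) \in R^*$. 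Second, I would choose any maximal ideal $\m \subseteq R$. By Zariski's lemma, $R/\m$ is a finite field extension of $k$, so the composite $k[t] \to R \to R/\m$ sending $t \mapsto \bar u$ has kernel a maximal ideal $(q) \subseteq k[t]$ with $q$ monic irreducible, and $q(u) \in \m$. Since $u \notin \m$, we get $q \ne t$, hence $q(0) \ne 0$; but then $q(u) \in R^*$ by the previous sentence, contradicting $q(u) \in \m$. Therefore $u \in L$.

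\emph{Claim (iii) and main obstacle.} For (iii), I would pick units $u_1, u_2 \in R$ with $u_1 + u_2$ neither a unit nor nilpotent---hence nonzero, since $R$ is a domain---and set $w := u_2 u_1^{-1} \in R^*$, so $1+w = (u_1+u_2)/u_1$ is nonzero and not a unit. If $w$ were algebraic over $k$, both $w$ and $1+w$ would lie in the field $L$, forcing $1+w$ to be a unit in $L$ (and thus in $R$)---contradiction. Thus $w$ is transcendental over $k$, and $t \mapsto w$ defines an injective $k$-algebra map $A \hookrightarrow R$. The main obstacle is claim (ii): the key move is converting two-element \ua\ into the strong statement ``$p(u) \in R^*$ whenever $p(0) \ne 0$'', and then exploiting the Nullstellensatz to produce a monic irreducible $q$ with $q(0) \ne 0$ and $q(u)$ in a maximal ideal---directly contradicting that strong statement.
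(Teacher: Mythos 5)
Your proof is correct. Note that the paper does not actually reprove this statement: it is quoted verbatim from \cite[Theorem 4.1]{nmeSh-unitadd}, so there is no in-text argument to compare against line by line. That said, your reasoning matches the circle of ideas the paper deploys when it generalizes this result (Lemma~\ref{lem:fieldconn} and Theorem~\ref{thm:new41}): there, one shows the units of a reduced \ua\ ring together with $0$ form a field (Proposition~\ref{pr:characterizeua}(4)) and that any subfield of a finitely generated $k$-algebra is algebraic over $k$ via Zariski's lemma. You have essentially inlined both facts --- your ``every nonzero $p(u)$ is a unit'' step is the field-of-units observation restricted to $k[u]$, and your maximal-ideal argument producing the monic irreducible $q$ with $q(u)\in\m$ is exactly the Zariski's-lemma step in the proof of Lemma~\ref{lem:fieldconn}. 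Two small remarks: you correctly read the final clause as ``if $R$ is \emph{not} \ua'' (the statement as printed contains a typo), and your restriction to $p(0)\neq 0$ in Claim (ii) is harmless but unnecessary, since every nonzero polynomial evaluated at a transcendental unit is already a nonzero sum of units, hence a unit.
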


\begin{thm}[{\cite[Theorem 4.2]{nmeSh-unitadd}}]\label{thm:oldfund}
Let $X=\Spec R$ be the scheme associated to an irreducible variety over an algebraically closed field $k$.  Let $\G_m = \A^1_k \setminus \{0\}$.  If $R$ is unit-additive, then all $k$-scheme morphisms $X \ra \G_m$ are constant.  Otherwise, there is a dominant $k$-scheme morphism $X \ra \G_m$, and all such maps have cofinite image.
\end{thm}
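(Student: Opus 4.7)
The plan is to use the adjunction between $\Spec$ and global sections to translate the statement into one about units of $R$. A $k$-scheme morphism $\phi: X = \Spec R \ra \G_m = \Spec k[t, t^{-1}]$ corresponds to a $k$-algebra homomorphism $k[t, t^{-1}] \ra R$, which is determined by the image $u \in R$ of $t$; moreover $u$ must be a unit. So the $k$-scheme morphisms $X \ra \G_m$ are in natural bijection with the units of $R$.

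Next I would verify that under this bijection $\phi$ is constant if and only if the corresponding unit $u$ lies in $k \setminus \{0\}$. If $u \in k \setminus \{0\}$, then the map $k[t, t^{-1}] \ra R$, $t \mapsto u$, factors as $k[t, t^{-1}] \onto k \into R$, so $\phi$ factors through the closed $k$-point of $\G_m$ determined by $u$, and is thus constant. Conversely, if $u \in R$ is a unit with $u \notin k$, then $u$ is transcendental over $k$ (since $k$, being algebraically closed, is algebraically closed inside $R$), so $k[t, t^{-1}] \ra R$ is injective and $\phi$ is dominant, in particular nonconstant.

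With this dictionary in hand, both halves of the theorem reduce immediately to Theorem~\ref{thm:uafgdomain}, applied with the integral closure $L$ of $k$ in $R$ equal to $k$ itself (again because $k$ is algebraically closed). If $R$ is unit-additive, every unit of $R$ lies in $L = k$, so every $X \ra \G_m$ is constant. Otherwise, Theorem~\ref{thm:uafgdomain} furnishes an injective $k$-algebra map $k[t, 1/t] \ra R$, which gives a dominant morphism $X \ra \G_m$.

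For the cofiniteness assertion I would invoke Chevalley's theorem: any $\phi: X \ra \G_m$ is of finite type, so $\phi(X)$ is a constructible subset of $\G_m$. Since $\G_m$ is irreducible and one-dimensional, its proper closed subsets are finite, and hence any constructible subset of $\G_m$ is either finite or cofinite. Dominance forces $\phi(X)$ to be Zariski-dense, so infinite, so cofinite. The main obstacle is really just packaging the reduction to Theorem~\ref{thm:uafgdomain}; the only delicate point is the identification $L = k$, which is exactly where the algebraic closedness of $k$ is used to pass from ``the only $k$-algebra maps $k[t, 1/t] \ra R$ send $t$ into $L$'' to ``every such morphism is constant''.
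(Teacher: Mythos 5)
Your proposal is correct. Note that the paper does not reprove this statement --- it is quoted from \cite[Theorem 4.2]{nmeSh-unitadd} --- but your derivation (the affine dictionary identifying $k$-scheme morphisms $X \ra \G_m$ with units of $R$, the observation that algebraic closedness forces $L=k$ so that every such morphism is either constant or dominant, the reduction to Theorem~\ref{thm:uafgdomain}, and Chevalley plus the structure of constructible subsets of an irreducible curve for cofiniteness) is exactly the route the paper follows when it proves the generalization Theorem~\ref{thm:new42} from Theorem~\ref{thm:new41}.
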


\subsection{Local unit-additivity}
As we shall see, unit-additivity is the ring-theoretic avatar of the property of a space being fundamental.  However, in order to complete the interface, we will need the notions of \emph{local} and \emph{geometric} unit-additivity, in part so that we may make a bridge to scheme theory.
There are many ways one might define local unit-additivity in an affine-local type way.  Fortunately, they all coincide, as seen in Theorem~\ref{thm:locua} and Proposition~\ref{pr:schemelocua}.

\begin{thm}\label{thm:locua}
Let $R$ be a commutative ring.  The following are equivalent. \begin{enumerate}
    \item\label{it:finprod} $R_\red$ is isomorphic to a finite product of \ua\ rings.
    \item\label{it:blua} There is a subset $S \subseteq R$ of $R$ that generates the unit ideal such that for all $f\in S$, $R_f$ is \ua.
    \item\label{it:wlua} There is an affine open cover $\cC$ of $\Spec R$ such that for all $U \in \cC$, $\cO_{\Spec R}(U)$ is \ua.
\end{enumerate}
\end{thm}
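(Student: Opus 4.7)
My plan is to establish the equivalence via the cycle (1) $\Rightarrow$ (2) $\Rightarrow$ (3) $\Rightarrow$ (1), with the last implication being the substantive one.

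For (1) $\Rightarrow$ (2), I would use the fact that idempotents lift across nil ideals such as $\mathrm{nil}(R)$: the orthogonal idempotents witnessing $R_{\mathrm{red}} = A_1 \times \cdots \times A_m$ lift to orthogonal idempotents $e_1, \dots, e_m \in R$ summing to $1$. These generate the unit ideal, and each $R_{e_i} \cong e_i R$ has reduction $A_i$; hence by Proposition~\ref{pr:modoutnilpotents}, $R_{e_i}$ is \ua. Take $S = \{e_1, \dots, e_m\}$. For (2) $\Rightarrow$ (3), take $\cC = \{D(f) : f \in S\}$; these basic opens cover $\Spec R$ and have \ua\ sections.

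For (3) $\Rightarrow$ (1), I would first reduce to $R = R_{\mathrm{red}}$ using Proposition~\ref{pr:modoutnilpotents} and use quasi-compactness of $\Spec R$ to extract a finite subcover $U_1, \dots, U_n$ with each $A_i := \cO_{\Spec R}(U_i)$ \ua\ and reduced. The heart is a \emph{connected case lemma}: if $\Spec R$ is connected, then $R$ itself is \ua. Given units $u, v \in R$ with $u + v$ nonzero, the image of $u+v$ in each reduced \ua\ ring $A_i$ is either a unit or zero. Partition the indices into $I$ (vanishing) and $J$ (unit); then $V(u+v) = \bigcup_{i \in I} U_i$ is open as a union of opens, as well as closed, hence clopen in $\Spec R$. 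Connectedness forces $V(u+v) = \emptyset$ or $V(u+v) = \Spec R$; the latter would force $u+v = 0$ globally, contradicting the hypothesis, so $u + v$ is nonvanishing on $\Spec R$ and therefore a unit in $R$.

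The main obstacle is bootstrapping from this connected case to the required \emph{finite} product decomposition. My plan is to use the finiteness of the cover to partition $\Spec R$ into finitely many clopens, each satisfying the connected hypothesis with respect to the restricted cover. The key bookkeeping tool is Proposition~\ref{pr:uaprod}: any nontrivial product decomposition of a \ua\ ring forces all factors to be UU, so any pieces of the $A_i$ that decompose further (even into infinitely many factors) are all UU and can be absorbed into a single UU summand in the final decomposition of $R_{\mathrm{red}}$. After such absorption, only finitely many ``essential'' connected pieces remain, giving a decomposition $R_{\mathrm{red}} = \prod_j R_j$ in which each $R_j$ inherits a finite \ua-sectioned cover and has connected spectrum, hence is \ua\ by the connected case lemma. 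Verifying the absorption rigorously—so that potentially infinite products of UU rings reassemble into a single \ua\ factor without disturbing the finiteness of the product in (1)—will be the delicate step.
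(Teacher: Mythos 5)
Your (1)$\Rightarrow$(2), (2)$\Rightarrow$(3), and the connected-case lemma inside (3)$\Rightarrow$(1) are all correct; the clopen-set argument for $V(u+v)$ in a reduced ring with connected spectrum is clean and valid. The gap is exactly where you flag it: the bootstrap from the connected case to the finite product demanded by (1). Two problems. First, you never construct the finite clopen partition. A finite affine open cover does not by itself single out a clopen decomposition (the $U_i$ may overlap, and $\Spec R$ may have infinitely many connected components, as for $R=\prod_{i=1}^{\infty}\F_2$, which is covered by a single affine open). The paper's device here is not topological at all: it refines each $U_i$ into finitely many basic opens $D(f)$, $f\in E$, and declares $f\sim g$ when, for \emph{every} unit $u$ of $R$, the element $u-1$ is simultaneously a unit in both $R_f,R_g$ or simultaneously nilpotent in both. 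The classes of $\sim$ yield pairwise disjoint closed sets $W_j=V(I_j)$ covering $\Spec R$, whence $R\cong\prod_j R/I_j$ by the Chinese Remainder Theorem; and each $R/I_j$ is unit-additive precisely because unit behavior is uniform across its class --- no connectedness is needed, so disconnected pieces cause no trouble.

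Second, your absorption step is circular as stated. Proposition~\ref{pr:uaprod} lets you conclude that the factors of a nontrivial product decomposition are UU only when you already know the ambient ring is unit-additive. That applies to the $A_i$ of the cover (so each $U_i$ is either connected or has UU, hence unit-trivial reduced, sections), but not to the candidate pieces of $R$ itself, whose unit-additivity is the conclusion you are after. Even granting that the ``UU region'' consists of pieces with trivial unit groups (which does reassemble harmlessly, since a product of reduced rings with trivial unit groups again has trivial unit group), you would still need to show that this region is clopen in $\Spec R$ and that the complementary connected pieces are finite in number and individually clopen --- none of which is established. So the proposal is sound up to and including the connected case, but the actual crux of (3)$\Rightarrow$(1) remains open; to close it you should replace the topological partition by the unit-behavior equivalence relation (or prove the clopen structure it induces by some equivalent means).
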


\begin{proof}
(\ref{it:finprod}) $\implies$ (\ref{it:blua}): Without loss of generality, we may assume we have equality rather than isomorphism.  Let $N$ be the nilradical of $R$. Write $R_\red = R/N = \prod_{i=1}^t R_i$, with each $R_i$ \ua.  For $1\leq i \leq t$, let $e_i$ be the idempotent such that $R_\red \cdot e_i = R_i$.  That is, $e_i = (0,0, \ldots, 0,1,0, \ldots, 0)$, with $1$ in the $i$th position and $0$ in the $j$th position for each $j\neq i$.  Let $f_i$ be a lift of $e_i$ to $R$, for each $1\leq i \leq t$, and set $f := \sum_{i=1}^t f_i$.  Then $f-1 \in N$, so $R = N + fR$. 
Since $N$ is a subset of the Jacobson radical of $R$, it follows from the Nakayama Lemma that $R=fR$.  For each $1\leq i \leq n$, we have $R_{f_i} / N R_{f_i} \cong (R/N)_{e_i} \cong R_i$, which is \ua.  Since $N R_{f_i}$ is contained in the nilradical of $R_{f_i}$, it follows from Proposition~\ref{pr:modoutnilpotents} that $R_{f_i}$ is \ua.  Hence, we may take our set $S$ as in (\ref{it:blua}) to be $\{f_1, \ldots, f_n\}$.

(\ref{it:blua}) $\implies$ (\ref{it:wlua}): Just let $\cC := \{D(f) \mid f \in S\}$.

(\ref{it:wlua}) $\implies$ (\ref{it:finprod}): 
By Proposition~\ref{pr:modoutnilpotents}, since $\cO_{\Spec R}(U)_\red = \cO_{\Spec R_\red}(U)$, we may assume $R$ is reduced.  Since $\Spec R$ is compact, we may assume $\cC$ is finite.  Write $\cC = \{U_1, \ldots, U_t\}$, where each $U_i$ is affine open.  Set $S_i := \cO_{\Spec R}(U_i)$ for $1\leq i \leq t$, so that each $S_i$ is \ua\ and $U_i = \Spec S_i$.  Since each $U_i$ is compact open and every open subset of $\Spec R$ is a union of open sets of the form $D(f)$, $f\in R$, for each $i$ there is a finite set $E_i \subseteq R$ such that $U_i = \bigcup_{f \in E_i} D(f)$.  Then for each $f\in E_i$, we have a pair of ring homomorphisms $R \ra S_i \ra R_f$ corresponding to the restriction maps in the sheaf $\cO_{\Spec R}$, whose composition is therefore the localization map $R \ra R_f$.  Set $E := \bigcup_{1 \leq i \leq t} E_i$.

Note that $E$ generates the unit ideal of $R$. To see this, let $\p \in \Spec R$.  Then since the $U_i$ cover $\Spec R$, there is some $i$ with $\p \in U_i$.  But then for some $f\in E_i$ (hence $f\in E$), we have $\p \in D(f)$. That is, $f\notin \p$, whence $E \nsubseteq \p$, which means that no prime ideal can contain the set $E$.

Impose a binary relation $\sim$ on $E$ as follows:
$f \sim g$ if for any unit $u$ of $R$, then when one considers the localization maps  $R \ra R_f$ and $R \ra R_g$, either $u-1$ maps to units in both $R_f$ and $R_g$, or $u-1$ maps to nilpotent elements in both $R_f$ and $R_g$.

I claim that $\sim$ is an equivalence relation.  It is obviously symmetric and transitive.  To see that it is reflexive, we just need that for all $f\in E$, and any unit $u$ of $R$, $u-1$ either maps to a unit or a nilpotent element in the localization $R_f$.  To see this, let $f \in E$.  Then there is some $1 \leq i \leq t$ such that $f \in E_i$.  Since $R \ra S_i$ is a ring homomorphism and $u$ is a unit in $R$, it is a unit in $S_i$.  Hence since $S_i$ is \ua, $u-1$ is either a unit or nilpotent in $S_i$.  But since $S_i \ra R_f$ is a ring homomorphism, $u-1$ is then either a unit or nilpotent in $R_f$, and since the composition $R \ra S_i \ra R_f$ is the localization map $R \ra R_f$, we are done.

Hence, $\sim$ induces a partition of the set $E$, which is finite since $E$ is a finite set.  Write $E = C_1 \coprod C_2 \coprod \cdots \coprod C_s$, where ``$\coprod$'' means disjoint union and each $C_j$ is an equivalence class under $\sim$.  It follows from the above argument on reflexivity that for any $1\leq i \leq t$ and any $f\in E_i$, if $f \in C_j$ then $E_i \subseteq C_j$.

For each $1\leq j \leq s$, write $W_j := \bigcup \{U_i \mid E_i \subseteq C_j\}$. Since each $E_i$ is contained in some $C_j$, we have $\bigcup_{j=1}^s W_j = \bigcup_{i=1}^t U_i = \Spec R$.  I claim that for any $j \neq k$, we have $W_j \cap W_k = \emptyset$.

To see this, suppose there is some element $\p \in W_j \cap W_k$, where $j \neq k$.  Then there exist $h,i$ and $f,g$ with $f \in E_h \subseteq C_j$, $g \in E_i \subseteq C_k$, and $\p \in D(f) \cap D(g)$.  Since $f \nsim g$, there is some unit $u$ of $R$ such that without loss of generality, $\frac{u-1}1$ is a unit of $R_f$, but $\frac{u-1}1$ is nilpotent in $R_g$.  From the first condition, it follows that there is some $c \in R$ and some $\ell \in \N$ with $f^\ell = (u-1)c$.  Since $f \notin \p$, it follows that $u-1 \notin \p$.  However, from the second condition, there exist $\ell', \ell'' \in \N$ such that $g^{\ell'} (u-1)^{\ell''} = 0 \in \p$.  But $g \notin \p$ and $u-1 \notin \p$, which contradicts the fact that $\p$ is prime.

Thus, $\Spec R$ is the disjoint union of the $W_j$. Hence, for each $1 \leq j \leq s$, we have \begin{align*}
W_j &= \Spec R \setminus \bigcup_{h \neq j} W_h = \Spec R \setminus \bigcup\{U_i \mid E_i \nsubseteq C_j\} \\
&= \bigcap \{V(f) \mid f \in E_i \text{ for some } i \text{ with } E_i \nsubseteq C_j\} \\
&= \bigcap \{V(f) \mid f \in E \setminus C_j\}= V(I_j),
\end{align*}
where $I_j$ is the ideal of $R$ generated by $E \setminus C_j$.

Since $V(I_j+I_k) = V(I_j) \cap V(I_k) = W_j \cap W_k = \emptyset$ for each $j\neq k$, we have that $I_j + I_k = R$ for each such pair.  That is, the ideals $I_j$ are pairwise comaximal.

Also, since $\Spec R = \bigcup_{j=1}^s W_j = \bigcup_{j=1}^s V(I_j) = V(\bigcap_{j=1}^s I_j)$, it follows that $\bigcap_{j=1}^s I_j$ is contained in every prime ideal of $R$, and hence in the nilradical of $R$.  Thus since $R$ is reduced, $\bigcap_{j=1}^s I_j=0$.

It now follows from the Chinese Remainder Theorem for commutative rings \cite[Proposition 1.10
]{AtMac-ICA} that the map \[
\phi: R \ra \prod_{j=1}^s \frac R {I_j}
\]
given by $r \mapsto (r+I_1, r+I_2, \ldots, r+I_s)$ is an isomorphism of rings.

As our final step, I will show that each component ring $R/I_j$ is \ua. First note that the set $B_j := \{f+I_j \mid f\in C_j\} \subset R/I_j$ generates the unit ideal of $R/I_j$.  This is because 
$(B_j) = (C_j) R/I_j = ((C_j) + (E \setminus C_j)) / (E \setminus C_j) = (E) / (E \setminus C_j) = R/I_j$.  Next, note that for any $f \in C_j$ and any $k \neq j$, we have $(R/I_k)_{\bar f} \cong R_f / I_k R_f = 0$, since $f \in I_k$.  Thus, we have \[
\left(\prod_{k=1}^s (R/I_k)\right)_{(\bar f, \bar f, \ldots, \bar f)} \cong \prod_{k=1}^s (R/I_k)_{\bar f} \cong  (R/I_j)_{\bar f}.
\]
Hence, $\phi$ induces an isomorphism between $R_f$ and $(R/I_j)_{\bar f}$.

Now let $u$ be a unit of $(R/I_j)$.  Let $v$ be the preimage via $\phi$ of $(1,1, \ldots, 1, u, 1, \ldots, 1)$, with $u$ in the $j$th spot and $1$ in every other spot.  Then since that tuple is a unit in the product, it follows that $v$ is a unit of $R$.  Let $f \in C_j$.  Then either $v-1$ is a unit or nilpotent in $R_f$.

If $v-1$ is a unit in $R_f$, then since $f \sim g$ for all $g \in C_j$, it follows that $v-1$ is a unit in $R_g$ for all such $g$.  Thus, $u-1$ is a unit in $(R/I_j)_{\bar g}$ for all $\bar g \in B_j$.  But since being a unit is a local property and $B_j$ generates the unit ideal of $R/I_j$, it follows that $u-1$ is a unit of $R/I_j$.

If $v-1$ is nilpotent in $R_f$, then since $f \sim g$ for all $g \in C_j$, it follows that $v-1$ is nilpotent in $R_g$ for all such $g$.  Thus, $u-1$ is nilpotent in $(R/I_j)_{\bar g}$ for all $\bar g \in B_j$.  But since being nilpotent is a local property and $B_j$ generates the unit ideal of $R/I_j$, it follows that $u-1$ is nilpotent in $R/I_j$.

Since $u$ was an arbitrary unit of $R/I_j$, it follows that $R/I_j$ is a \ua\ ring, which then finishes the proof of (\ref{it:finprod}).
\end{proof}

\begin{defn}\label{def:locua}
    If a ring $R$ satisfies one (hence all) of the conditions of Theorem~\ref{thm:locua},  call it \emph{locally \ua}.
\end{defn}

\begin{rem}\label{rem:ref}
The referee pointed out a different way to prove that (\ref{it:blua}) $\implies$ (\ref{it:finprod}), by developing and using elementary facts about idempotent elements.  In particular, a partition somewhat like the sets $C_j$ above are found, and then one shows that the ideals they generate are also generated by orthogonal idempotents. The enterprising reader may try their hand at reproducing it!  However, such an approach does not seem to lead to a shorter proof that (\ref{it:wlua}) $\implies$ (\ref{it:finprod}), which is the implication I need in order for the scheme-theoretic definition of local unit-additivity to work the way one wishes (see Definition~\ref{def:uascheme} and the ensuing discussion).

The referee also points out that of these three definitions of unit-additivity that are equivalent in the standard axiomatic framework, the `best' one from the point of view of constructive mathematics (i.e. avoiding the Law of the Excluded Middle) seems to be (\ref{it:blua}), as the next two corollaries follow quite quickly from it.  A constructive version of this paper would be interesting, but is beyond my expertise.
\end{rem}

Note that any \ua\ ring is locally \ua, as it is the product of a \emph{single} \ua\ ring.

\begin{cor}\label{cor:connua}
Let $R$ be a ring with connected spectrum.  Then $R$ is \ua\ $\iff$ it is locally \ua.
\end{cor}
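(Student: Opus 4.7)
The forward implication is the remark immediately preceding the corollary: a single unit-additive ring is vacuously a product of unit-additive rings (one factor), so the case $t=1$ of Theorem~\ref{thm:locua}(\ref{it:finprod}) applies to $R_\red$, which is unit-additive whenever $R$ is by Proposition~\ref{pr:modoutnilpotents}. So I would dispatch this direction in one sentence.

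For the converse, my plan is to combine the product description in Theorem~\ref{thm:locua}(\ref{it:finprod}) with the topological hypothesis. Assuming $R$ is locally unit-additive, write $R_\red \cong \prod_{i=1}^t R_i$ with each $R_i$ unit-additive, and discard any zero factors so that each $R_i \neq 0$. Then $\Spec R_\red$ is homeomorphic to $\Spec R$ (passing to the reduction does not change the underlying topological space), so $\Spec R_\red$ is connected by hypothesis.

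Next I would use the standard fact that for a finite product of nonzero rings, $\Spec\bigl(\prod_{i=1}^t R_i\bigr)$ decomposes as the topological disjoint union $\coprod_{i=1}^t \Spec R_i$, with each $\Spec R_i$ a nonempty clopen piece (the idempotents $e_i$ witness this). Connectedness of the union of nonempty disjoint clopens forces $t=1$, so $R_\red \cong R_1$ is unit-additive. One final invocation of Proposition~\ref{pr:modoutnilpotents} then yields that $R$ itself is unit-additive.

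There is no real obstacle here: the heavy lifting has already been done in Theorem~\ref{thm:locua} and Proposition~\ref{pr:modoutnilpotents}. The only thing to be slightly careful about is the reduction to nonzero factors in the product decomposition, and the observation that $\Spec$ is insensitive to nilpotents, so that the connectedness hypothesis transfers to $R_\red$ where the product structure lives.
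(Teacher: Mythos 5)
Your proposal is correct and matches the paper's own argument: the forward direction is the one-factor observation (the paper invokes Theorem~\ref{thm:locua}(\ref{it:blua}) with $S=\{1\}$, a cosmetic difference from your use of condition (\ref{it:finprod})), and the converse is exactly the paper's route via $R_\red \cong \prod_{i=1}^t R_i$, the homeomorphism $\Spec R_\red \cong \Spec R$, connectedness forcing $t=1$, and Proposition~\ref{pr:modoutnilpotents}.
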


\begin{proof}
Any \ua\ ring is locally \ua\ by Theorem~\ref{thm:locua}(\ref{it:blua}), with $S = \{1\}$.

Conversely, suppose $R$ is locally \ua.  By Theorem~\ref{thm:locua}(\ref{it:finprod}), $R_\red$ is a finite product of \ua\ rings.  Write $R_\red = R_1 \times \cdots \times R_t$, with each $R_i$ \ua.  But if each $R_i$ is a nonzero ring, then since $\Spec R_\red$ is homeomorphic to $\Spec R$ and the latter is connected, it follows that $t=1$.  Thus $R_\red$ is \ua, whence $R$ is \ua\ by Proposition~\ref{pr:modoutnilpotents}. \end{proof}

\begin{cor}\label{cor:locuaprod}
Let $S, T$ be rings and $R=S \times T$.  Then $R$ is locally \ua\ if and only if $S$ and $T$ are locally \ua.
\end{cor}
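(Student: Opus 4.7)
The plan is to leverage Theorem~\ref{thm:locua}(\ref{it:finprod}) as the working characterization of local unit-additivity, combined with the elementary identity $(S \times T)_\red = S_\red \times T_\red$, and to track idempotent decompositions across an isomorphism of finite products of rings.

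For the ``$\Leftarrow$'' direction, I would assume $S$ and $T$ are locally \ua. Then Theorem~\ref{thm:locua}(\ref{it:finprod}) yields $S_\red \cong \prod_{i=1}^m U_i$ and $T_\red \cong \prod_{j=1}^n V_j$ with every $U_i$ and $V_j$ \ua. Concatenating these two products gives a presentation of $R_\red = S_\red \times T_\red$ as a finite product of \ua\ rings, so another application of the same theorem shows $R$ is locally \ua.

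For ``$\Rightarrow$'', I would assume $R$ is locally \ua, so $R_\red \cong \prod_{i=1}^n U_i$ with each $U_i$ \ua. Let $e := (1_{S_\red}, 0) \in S_\red \times T_\red = R_\red$ be the central idempotent whose associated direct factor is $S_\red$. Under the isomorphism $R_\red \cong \prod U_i$, the image of $e$ is a tuple $(v_1, \ldots, v_n)$ with each $v_i \in U_i$ idempotent, and $S_\red \cong e R_\red \cong \prod_{i=1}^n v_i U_i$. I would then show that each factor $v_i U_i$ is either zero or \ua\ by a short case analysis: if $v_i = 0$ the factor is zero and can be dropped; if $v_i = 1$ it is all of $U_i$ and is \ua\ by hypothesis; and if $v_i \notin \{0,1\}$, then $U_i \cong v_i U_i \times (1 - v_i) U_i$ is a product of two nonzero rings, so Proposition~\ref{pr:uaprod} applied to the \ua\ ring $U_i$ forces both $v_i U_i$ and $(1-v_i) U_i$ to be UU (in particular \ua). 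Discarding zero factors then exhibits $S_\red$ as a finite product of \ua\ rings, so $S$ is locally \ua\ by Theorem~\ref{thm:locua}(\ref{it:finprod}); the same argument with the complementary idempotent $1 - e$ handles $T$.

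The only place requiring genuine care — and therefore the main potential obstacle — is the invocation of Proposition~\ref{pr:uaprod}, which is a statement specifically about products of \emph{two nonzero} rings; this is what forces the case split on whether $v_i$ is trivial. Once this is handled, the rest is routine manipulation of idempotents in a product ring.
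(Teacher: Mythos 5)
Your proof is correct, and the ``$\Leftarrow$'' direction is exactly the paper's (concatenate the finite product decompositions of $S_\red$ and $T_\red$ and apply Theorem~\ref{thm:locua}(\ref{it:finprod})). For the ``$\Rightarrow$'' direction you take a genuinely different route: you stay with characterization (\ref{it:finprod}) and push the idempotent $e=(1,0)$ through the isomorphism $R_\red \cong \prod_i U_i$, splitting each $U_i$ as $v_iU_i \times (1-v_i)U_i$ and invoking Proposition~\ref{pr:uaprod} to see that each nonzero piece is UU, hence \ua. The paper instead switches to characterization (\ref{it:blua}): if $E \subseteq R$ generates the unit ideal with each $R_f$ \ua, then writing $f=(g,h)$ gives $R_f \cong S_g \times T_h$, so Proposition~\ref{pr:uaprod} (plus the same zero-factor case split you flag) shows each $S_g$ is \ua, and the first coordinates of $E$ generate the unit ideal of $S$. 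Both arguments bottleneck at the same place --- extracting unit-additivity of the factors of a two-factor product via Proposition~\ref{pr:uaprod}, with care for zero factors --- so the substance is equivalent; your version has the mild advantage of never leaving the reduced finite-product picture, at the cost of some bookkeeping with idempotents, while the paper's localization argument is shorter once Theorem~\ref{thm:locua} is in hand. Your handling of the case $v_i \notin \{0,1\}$ is exactly right and is the one point where a careless application of Proposition~\ref{pr:uaprod} (which requires both factors nonzero) could go wrong.
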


\begin{proof}
Note that $R_\red = S_\red \times T_\red$.  Thus, the ``if'' direction follows directly from Theorem~\ref{thm:locua}(\ref{it:finprod}).  Since for any $f=(g,h) \in R$, we have $R_f \cong S_g \times T_h$, the converse follows from Theorem~\ref{thm:locua}(\ref{it:blua}) and 
Proposition~\ref{pr:uaprod}.
\end{proof}

\begin{rem}
Local unit-additivity does not imply unit-additivity.  Let $S$, $T$ be \ua\ rings and such that $S$ is not UU (i.e., $S_\red$ has at least two distinct units), and let $R := S \times T$.  Then $R$ is locally \ua\ by Corollary~\ref{cor:locuaprod}, but $R$ is not \ua\ by Proposition~\ref{pr:uaprod}.
\end{rem}

Next, we generalize to schemes.
\begin{defn}\label{def:uascheme}
An affine scheme is \emph{\ua} if it is the spectrum of a \ua\ ring.
A scheme $X$ is \emph{locally \ua} if it has a cover by affine open \ua\ subschemes.
\end{defn}

\begin{rem} 
Note that the above terminology is compatible with that of Definition~\ref{def:locua}. That is, a ring $R$ is locally \ua\ if and only if $\Spec R$ is a locally \ua\ scheme.  This follows from using  Theorem~\ref{thm:locua}(\ref{it:wlua}) as our working definition of local unit-additivity of rings.
\end{rem}

\begin{prop}\label{pr:schemelocua}
    Let $X$ be a quasicompact scheme with only finitely many connected components (e.g. any Noetherian or affine connected scheme), written $X = X_1 \coprod \cdots \coprod X_n$. The following are equivalent. \begin{enumerate}[(a)]
        \item $X$ is locally \ua.
        \item Each $X_j$ is locally \ua.
    \end{enumerate}
    If $X$ is itself affine, then another equivalent condition is: \begin{enumerate}[(c)]
        \item Each $X_j$ is \ua.
    \end{enumerate}
\end{prop}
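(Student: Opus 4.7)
The plan is to reduce the equivalences to cover arguments supported by the algebraic results already established. The key preliminary is that because $X$ has only finitely many connected components, each $X_j$ is clopen in $X$: it is closed as a connected component, and open as the complement of $\bigcup_{i \neq j} X_i$, a finite union of closed sets.

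For (a) $\implies$ (b), I would take an affine open \ua\ cover $\cC$ of $X$ and show that each intersection $U \cap X_j$ (for $U \in \cC$) is again an affine open \ua\ subscheme. The point is that $U$ is affine, and the clopen decomposition $U = \coprod_{j=1}^n (U \cap X_j)$ corresponds via orthogonal idempotents to a product decomposition $\cO_X(U) \cong \prod_{j=1}^n \cO_X(U \cap X_j)$. Thus each $U \cap X_j$ is itself affine, and iterated application of Proposition~\ref{pr:uaprod} to the \ua\ ring $\cO_X(U)$ forces each nonzero factor to be UU, in particular \ua. Letting $U$ range over $\cC$ yields a \ua\ affine open cover of each $X_j$.

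The converse direction (b) $\implies$ (a) is immediate: since each $X_j$ is open in $X$, we may take the union of \ua\ affine open covers of the individual $X_j$'s to obtain a \ua\ affine open cover of $X$. For the affine case $X = \Spec R$, the clopen decomposition translates (via orthogonal idempotents summing to $1$) into a ring-theoretic product $R \cong \prod_{j=1}^n R_j$ with $X_j = \Spec R_j$ connected and affine. Corollary~\ref{cor:connua} applied to each $R_j$ then says $R_j$ is locally \ua\ if and only if it is \ua; combined with (a) $\iff$ (b), this yields (a) $\iff$ (c).

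The main obstacle is the bookkeeping around the clopen decomposition $U = \coprod_j (U \cap X_j)$ of an affine open, specifically the justification that each piece is again an affine open subscheme whose ring of sections is the corresponding product factor of $\cO_X(U)$. This is a standard fact about idempotents in affine schemes but merits explicit mention so that Proposition~\ref{pr:uaprod} can be applied in its finite-product form.
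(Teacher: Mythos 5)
Your proposal is correct and follows essentially the same route as the paper: decompose each affine open $U$ of a \ua\ cover as $U = \coprod_j (U \cap X_j)$ with $\cO_X(U) \cong \prod_j \cO_X(U \cap X_j)$, apply Proposition~\ref{pr:uaprod} to get (a) $\implies$ (b), take unions of covers for the converse, and use Corollary~\ref{cor:connua} for the affine case. The only cosmetic difference is that the paper first passes to a finite subcover via quasicompactness, which (as your argument shows) is not actually needed for this direction.
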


\begin{proof}
(b) $\implies$ (a): Take the union of selected open affine \ua\ covers of $X_i$ for each $1\leq i \leq n$.

(a) $\implies$ (b): By compactness, we can replace the given cover by a finite subcover, so $X = U_1 \cup \cdots \cup U_m$, with each $U_i$ open, affine, and \ua.  For each pair $i,j$, we have that $U_i \cap X_j$ is an open affine subscheme of both $U_i$ and $X_j$, and $\cO_X(U_i) \cong \prod_{j=1}^n \cO_X(U_i \cap X_j)$.  Then by Proposition~\ref{pr:uaprod}, each $\cO_X(U_i \cap X_j)$ is \ua. Thus for each $j$, we have that $\{U_i \cap X_j \mid 1\leq i \leq m\}$ is an open affine \ua\ cover of $X_j$.

Now we specialize to the case $X=\Spec R$ is affine.

(c) $\implies$ (a): This is clear since each $X_j$ is affine and open in $X$.

(b) $\implies$ (c): This follows from Corollary~\ref{cor:connua}.
\end{proof}

\begin{rem}\label{rem:refconn}
Even when $X$ is affine, the $U_i$ in the proof above need not be connected.  Indeed, any finite product of UU rings (e.g., $\F_2[x,y]$, $\F_2$, $\Z / 8\Z$, etc.) is \ua\ by Proposition~\ref{pr:uaprod}, so for instance if $R=\F_2 \times \F_2[t]$, we can let $X=\Spec R = U$, and $U$ is an affine open unit-additive subset of $X$, but $X= U=\Spec \F_2 \coprod \Spec \F_2[t]$ has two connected components. However, this subtlety does not arise for algebras over a field of at least three elements.
\end{rem}

\subsection{Generic unit-additivity}

In geometry and topology, it is interesting to know when a property holds \emph{generically} -- i.e., on ``nearly all'' of the space in question.  In commutative algebra, this is typically measured by localizing at a single element to get the equivalent of a dense open set.  Hence, consider the following.
\begin{defn}\label{def:genua}
An integral domain $R$ is \emph{generically \ua} if there is some nonzero element $a \in R$ such that $R_a$ is \ua.
\end{defn}

\begin{example}\label{ex:genuapowerser}
Generic unit-additivity can be weaker than local unit-additivity.  For example, let $R = k[\![X]\!]$. Then $R$ is not \ua\ (see \cite[Example 2.6]{nmeSh-unitadd}), and hence is not locally \ua\ (see Corollary~\ref{cor:connua}), but it is generically \ua\ since $R[1/X] = k(\!(X)\!)$ is a field.
\end{example}

However, the above phenomenon cannot occur with domains that are finitely generated algebras over a field.

\begin{prop}\label{pr:genuafg}
If $R$ is a generically \ua\ domain that is finitely generated over a field $k$, then it is \ua.
\end{prop}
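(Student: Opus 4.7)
The plan is to prove the contrapositive, using Theorem~\ref{thm:uafgdomain} in both directions. That theorem gives, for any finitely generated $k$-algebra domain $S$, the biconditional that $S$ is unit-additive if and only if there is no injective $k$-algebra homomorphism from $A := k[t, 1/t]$ into $S$ (the ``only if'' is the first assertion of Theorem~\ref{thm:uafgdomain}, and the ``if'' is its second assertion).

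Before applying this, I would note the bookkeeping fact that for any nonzero $a \in R$, the localization $R_a$ is again a finitely generated $k$-algebra domain: if $R = k[r_1, \ldots, r_n]$, then $R_a = k[r_1, \ldots, r_n, 1/a]$, and localizing an integral domain away from a nonzero element keeps it an integral domain. Hence Theorem~\ref{thm:uafgdomain} applies verbatim to each such $R_a$.

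Now suppose $R$ is not unit-additive. By the second part of Theorem~\ref{thm:uafgdomain}, there exists an injective $k$-algebra map $\phi: A \hookrightarrow R$. For any nonzero $a \in R$, the localization map $\iota_a: R \hookrightarrow R_a$ is injective since $a$ is a nonzerodivisor in the domain $R$, so the composition $\iota_a \circ \phi: A \hookrightarrow R_a$ is an injective $k$-algebra map. By the first part of Theorem~\ref{thm:uafgdomain} applied to $R_a$, this forces $R_a$ to be non-unit-additive as well. Since $a$ was an arbitrary nonzero element, $R$ is not generically unit-additive, which is the desired contrapositive. The proof contains no serious obstacle: the whole argument is two invocations of Theorem~\ref{thm:uafgdomain}, bridged by the observation that a localization at a nonzero element of a finitely generated $k$-algebra domain remains an injective target of the same type.
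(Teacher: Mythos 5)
Your proposal is correct and is essentially identical to the paper's proof: both argue the contrapositive, obtain an injective $k$-algebra map $k[t,t^{-1}]\hookrightarrow R$ from Theorem~\ref{thm:uafgdomain}, compose with the (injective) localization map into $R_a$, note that $R_a$ is again a finitely generated $k$-algebra domain, and apply Theorem~\ref{thm:uafgdomain} once more to conclude $R_a$ is not unit-additive.
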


\begin{proof}
To prove the contrapositive, suppose $R$ is not \ua.  Let $A = k[t, t^{-1}]$.  By Theorem~\ref{thm:uafgdomain}, there is an injective $k$-algebra map $\phi: A \ra R$.  Let $0 \neq x \in R$.  Let $\ell: R \ra R_x$ be the localization map.  Then $R_x \cong R[y] / (xy-1)$ is a domain that is finitely generated as a $k$-algebra, and $\ell \circ \phi$ is an injective $k$-algebra map.  Thus, by Theorem~\ref{thm:uafgdomain}, $R_x$ is not \ua.  Since $x$ was arbitrary, $R$ is not generically \ua.
\end{proof}

In fact, generic unit-additivity has an interesting connection with G-domains, as in \cite{Kap-CR}.  Recall that a \emph{G-domain} is an integral domain $R$ that admits a nonzero element $f$ such that $R[1/f]$ is a field (necessarily the fraction field of $R$).  

\begin{prop}[Shapiro]\label{pr:Gdom}
Let $R$ be a reduced ring and $0\neq f \in R$.  Then $R_f$ is \ua\ if and only if there is a subring $A$ of $R$ containing $f$ such that the following three conditions hold: \begin{enumerate}
    \item The ideal $I := \bigcup_{n \in \N} (0 : f^n)$ of $R$ is a prime ideal of $A$.
    \item Every element $d$ of $R/I$ that divides a power of $f$ (in $R/I$) is an element of $A/I$.
    \item $A/I$ is a G-domain and $A_f \cong (A/I)_{\bar f}$ is a field.
\end{enumerate}
If so, then $A_f$ is the field of units of $R_f$.
\end{prop}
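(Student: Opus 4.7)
The plan is to invoke Proposition~\ref{pr:characterizeua}(4), which says that for a reduced ring, unit-additivity is equivalent to the set of units together with $0$ forming a subfield. For the forward direction I would take $A$ to be the preimage in $R$ of that subfield of $R_f$; for the converse, the hypotheses on $A$ are arranged precisely so that $A_f$ can be identified with $U(R_f) \cup \{0\}$ inside $R_f$.

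To set up, let $\ell : R \to R_f$ be the localization map, so that $\ker \ell = I$. Reducedness of $R$ forces $f \notin I$ (otherwise some power of $f$ would vanish, hence $f=0$), makes $R_f$ reduced, makes $\bar f$ a non-zerodivisor in $R/I$, and gives $R_f \cong (R/I)_{\bar f}$. For the forward direction, assume $R_f$ is \ua\ and set $K := U(R_f) \cup \{0\}$, a subfield of $R_f$, and $A := \ell^{-1}(K)$. Then $f \in A$ (since $f/1 \in K^{\times}$) and $I \subseteq A$, and the restriction of $\ell$ to $A$ embeds $A/I$ into the field $K$, giving (1). Condition (2) follows from the observation that if $\bar d\,\bar e = \bar f^m$ in $R/I$, then $(d/1)(e/1) = f^m/1 \in U(R_f)$, so $d/1 \in K$ and thus $d \in A$. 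For (3), I would check $A_f = K$ inside $R_f$: any $u \in K^{\times}$ written as $u = r/f^n$ satisfies $r/1 = u \cdot f^n/1 \in K$, forcing $r \in A$; the reverse containment $A_f \subseteq K$ is immediate. Since $f \notin I$, the canonical surjection $A_f \onto (A/I)_{\bar f}$ is injective, so $(A/I)_{\bar f} \cong A_f = K$ is a field and $A/I$ is a G-domain.

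For the converse, given $A$ satisfying (1)--(3), the composition $A \to A/I \to R_f$ factors through $A_f$, and the induced map $A_f \to R_f$ is injective because its kernel is $IA_f$, which is a proper ideal of the field $A_f$ (proper precisely because $f \notin I$) and therefore zero. I claim $U(R_f) \cup \{0\} = A_f$ as subsets of $R_f$, which by Proposition~\ref{pr:characterizeua}(4) will yield both the unit-additivity of $R_f$ and the identification of $A_f$ as its field of units. One containment is automatic: $A_f \setminus \{0\} \subseteq U(R_f)$ since $A_f$ is a field embedded in $R_f$. For the other, take $u = a/f^n \in U(R_f)$ with inverse $b/f^m$. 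Then $f^k(ab - f^{n+m}) = 0$ in $R$ for some $k$, so $\bar a\,\bar b = \bar f^{n+m}$ in $R/I$; by (2), $\bar a \in A/I$, so after adjusting $a$ by an element of $I$ we may assume $a \in A$, whence $u \in A_f$.

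The main obstacle is really just bookkeeping: juggling the three rings $A \subseteq R$, their reductions modulo $I$, and their localizations at $f$, along with the canonical maps among them. The structural inputs that make everything run smoothly are $f \notin I$, the non-zerodivisor property of $\bar f$ in $R/I$, and the injectivity of $A_f \to R_f$; once these are laid down, each of (1)--(3) translates transparently into a structural property of the putative field of units $K \subseteq R_f$.
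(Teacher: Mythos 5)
Your proposal is correct and follows essentially the same route as the paper: in the forward direction you take $A$ to be the preimage under the localization map of the field of units $U(R_f)\cup\{0\}$ furnished by Proposition~\ref{pr:characterizeua}(4), and in the converse you use condition (2) to show every unit $a/f^n$ of $R_f$ already lies in the subfield $A_f$, then invoke Proposition~\ref{pr:characterizeua} again. The only cosmetic difference is that you verify the injectivity of $A_f\to R_f$ and the equality $A_f=U(R_f)\cup\{0\}$ slightly more explicitly than the paper does; the substance is identical.
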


\begin{proof}
Suppose $R_f$ is \ua.  Set $I :=\bigcup_{n \in \N} (0 : f^n)$.  Then $I$ is the kernel of the localization map $R \ra R_f$, so one can see $R/I$ as a subring of $R_f$. Note that $k := U(R_f) \cup \{0\}$ is a field by Proposition~\ref{pr:characterizeua}.  Let $A' := k \cap R/I$, and let $A$ be the preimage of $A'$ via the natural projection $R \onto R/I$.  Then $I \subseteq A$, and $A/I \cong A' \hookrightarrow k$. Since $k$ is an integral domain, so is it subring $A'$, so that $I$ is a prime ideal of $A$.  Now let $d,e \in R$ and $n\in \N$ with $de-f^n \in I$.  Then $d/1$ is a unit of $R_f$, whence $\bar d \in k \cap R/I = A/I$. It remains to show that the induced map $A'_{\bar f} \hookrightarrow k$ is surjective.  To see this, let $c/f^n \in k$, $c \in R$.  Then by (2), $\bar c \in A'$, so $c/f^n = \bar c / \bar f^n\in A'_{\bar f}$.  But also $A_f \cong A'_{\bar f}$ since $I$ is the kernel of $A \ra A_f$, which means that $A_f$ is the field of units of $R_f$.

Conversely, suppose the given three conditions hold. Let $v$ be a unit of $R_f$.  Write $v = u/f^n$ with $u\in R$.  Then there exist $d\in R$ and $m \in \N$ with $\bar d \bar u = \bar f^{n+m}$ in $R/I$.  Then by (2), $\bar u$ is an element of $A/I$.  Thus, $v=u/f^n \in A_f =:k$, which is a field by (3).  Thus, all units of $R_f$ are in a subfield of $R_f$, whence $R_f$ is \ua\ by Proposition~\ref{pr:characterizeua}.
\end{proof}

\begin{cor}[Shapiro]\label{cor:hasGdom}
Let $R$ be an integral domain that is generically \ua\ but not \ua.  Then $R$ contains a G-domain that is not a field.
\end{cor}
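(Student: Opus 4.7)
The plan is to apply Proposition~\ref{pr:Gdom} to produce a candidate G-domain $A$ inside $R$, and then argue by contradiction that $A$ cannot itself be a field, using the assumption that $R$ is not \ua.

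First I would pick a nonzero $f \in R$ witnessing generic unit-additivity, so $R_f$ is \ua. Since $R$ is a domain, the ideal $I = \bigcup_{n \in \N} (0:f^n)$ appearing in Proposition~\ref{pr:Gdom} is zero, so applying that proposition to the reduced ring $R$ and the element $f$ yields a subring $A \subseteq R$ containing $f$ such that $A$ is a G-domain and $A_f$ is the field of units of $R_f$.

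The heart of the argument is ruling out $A$ being a field. Suppose toward contradiction it is. Because $f$ is a nonzero element of the field $A$, we have $A = A_f$, so every unit of $R_f$ already lies in $A \subseteq R$; hence via the inclusion $R \into R_f$ we get $U(R) \subseteq A$. Conversely, any nonzero element of the field $A$ is inverted inside $A \subseteq R$, so $A \setminus \{0\} \subseteq U(R)$. Combining these, $U(R) \cup \{0\} = A$ is a field, and Proposition~\ref{pr:characterizeua}(4) then forces the reduced ring $R$ to be \ua, contradicting the hypothesis. The key observation powering this step, and the only place one has to be alert, is that ``$A_f$ is the field of units of $R_f$'' already pins down $U(R)$ completely once $A$ is assumed to be a field; everything else is bookkeeping provided by the previous proposition.
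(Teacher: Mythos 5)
Your proof is correct, and it follows the paper's overall strategy: invoke Proposition~\ref{pr:Gdom} with $I=0$ to produce the subring $A$, note that $A$ is a G-domain, and use the failure of unit-additivity of $R$ to rule out $A$ being a field. The one place you diverge is in how that last step is carried out. The paper argues directly: since $R$ is not \ua, there is a unit $u$ of $R$ with $u+1$ a nonzero nonunit; because $u$ divides a power of $f$ in $R$, condition (2) of Proposition~\ref{pr:Gdom} puts $u$ (hence $u+1$) inside $A$, and a nonzero nonunit of $R$ lying in $A$ is a nonzero nonunit of $A$. You instead argue by contradiction, using the final clause of Proposition~\ref{pr:Gdom} (that $A_f$ is the field of units of $R_f$) together with $A=A_f$ to identify $A$ with $U(R)\cup\{0\}$, and then Proposition~\ref{pr:characterizeua}(4) to conclude $R$ would be \ua. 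Both routes are sound; the paper's is more elementary in that it only needs one witness unit and never appeals to the characterization of \ua\ rings via the field of units, while yours is arguably more conceptual, making explicit that a field-valued $A$ would force $U(R)\cup\{0\}$ to be a field. Each step you use (the identification $A=A_f$ for a field $A$ containing the nonzero $f$, the injectivity of $R\into R_f$ for a domain, and the two inclusions giving $A=U(R)\cup\{0\}$) checks out.
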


\begin{proof}
Let $0\neq f \in R$ such that $R_f$ is \ua.  Let $A$ and $I$ be as in Proposition~\ref{pr:Gdom}. Then $I=0$ since $f$ is not a zero-divisor of $R$. 
Since $R$ is not \ua, there is some unit $u \in R$ such that $u+1$ is a nonzero nonunit.  Then $u/1$ is a unit of $R_f$, which means there is some $r\in R$ and $n\in \N$ with $ur=f^n$. 
 But then $u\in A$ by Proposition~\ref{pr:Gdom}. 
Since $u+1$ is a nonzero nonunit of $R$, it must be a nonzero nonunit of the subring $A$.  Thus, $A$ is not a field, but $A_f$ is, so $A$ is a G-domain.
\end{proof}

\section{Unit-additive rings correspond to fundamental varieties}\label{sec:uafund}

In \cite[Section 4]{nmeSh-unitadd}, the author and Jay Shapiro proved a theorem characterizing \ua\ rings among \emph{integral} finitely generated $k$-algebras.
In this section, I generalize these results to the reduced case  (see Theorem~\ref{thm:new41}), hence encompassing all algebraic varieties, irreducible or not.  I then use this to show that an affine variety is (locally) fundamental if and only if its coordinate ring is (locally) unit-additive (see Theorem~\ref{thm:FTAiffua}).

First, consider the following:

\begin{lemma}\label{lem:fieldconn}
Let $R$ be a reduced ring with connected spectrum and only finitely many minimal primes.  Let $k$ be a field that is a subring of $R$, and let $L$ be the integral closure of $k$ in $R$.  Then $L$ is a field that is contained in all the residue fields of $R$.  If $R$ is finitely generated as a $k$-algebra, then $L$ is the unique maximal subfield of $R$.
\end{lemma}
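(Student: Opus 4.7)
The plan is to split the claim into three parts: (a) $L$ is a field, (b) $L$ embeds into every residue field of $R$, and (c) under the finite-generation hypothesis, every subfield of $R$ containing $k$ is contained in $L$.

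For (a), I would exploit the fact that connectedness of $\Spec R$ is equivalent to $R$ having no nontrivial idempotents. Given $0 \neq \alpha \in L$, integrality over $k$ makes $k[\alpha] \subseteq R$ a finite-dimensional $k$-algebra, and it is reduced because $R$ is. By the structure of reduced Artinian commutative algebras, $k[\alpha]$ is a finite product of finite field extensions of $k$; a nontrivial product would carry a nontrivial idempotent, hence place one in $R$, contradicting connectedness of $\Spec R$. So $k[\alpha]$ is itself a finite field extension of $k$, which gives $\alpha^{-1} \in k[\alpha] \subseteq L$. Thus $L$ is a field.

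For (b), for any prime $\p$ of $R$ the composite $L \hookrightarrow R \to \kappa(\p)$ is a nonzero ring homomorphism out of a field, hence injective. For (c), let $K$ be a subfield of $R$ with $k \subseteq K$, and let $\alpha \in K$. If $\alpha$ is transcendental over $k$, then since every nonzero element of $k[\alpha] \subseteq K$ is invertible in $K$, the field $k(\alpha)$ embeds into $K$, hence into $R$. For any maximal ideal $\m$ of $R$, the composite $k(\alpha) \hookrightarrow R \to R/\m$ is a nonzero ring map from a field, so is injective; but by Zariski's lemma $R/\m$ is a finite algebraic extension of $k$, which cannot contain an infinite-dimensional subfield. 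Hence $\alpha$ is algebraic over $k$, so integral, and thus in $L$. Together with (a), this yields $L$ as the unique maximal $k$-subfield of $R$.

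The main obstacle I anticipate is (a): converting the topological hypothesis of connectedness of $\Spec R$ into the algebraic statement that $k[\alpha]$ is indecomposable as a ring. The finitely-many-minimal-primes hypothesis plays no active role in the idempotent-based approach sketched above; if one prefers to use it, one can instead work through the embedding $R \hookrightarrow \prod_{i=1}^{n} R/\p_i$ into a finite product of domains and analyze the images of $\alpha$ there, but this route seems strictly more cumbersome than the direct one.
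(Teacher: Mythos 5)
Your proposal is correct, and for the key claim that $L$ is a field it takes a genuinely different route from the paper. The paper argues globally on $L$: it intersects the finitely many minimal primes $\p_1,\dotsc,\p_n$ of $R$ with $L$, notes that the resulting distinct primes $\q_i$ are pairwise comaximal (as $\dim L=0$) with intersection zero, applies the Chinese Remainder Theorem to write $L \cong \prod_i L/\q_i$, and then uses connectedness of $\Spec R$ to force a single factor. Your argument is instead elementwise: $k[\alpha]$ is a finite-dimensional reduced $k$-algebra, hence a finite product of finite field extensions, and connectedness forbids a nontrivial idempotent, so $k[\alpha]$ is a field and $\alpha^{-1}\in L$. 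This is cleaner in that, as you observe, it makes no use of the finitely-many-minimal-primes hypothesis (which the paper's CRT step does use), so your version of the ``$L$ is a field'' claim is slightly more general. Parts (b) and (c) match the paper's proof in substance; the paper's version of (c) deduces algebraicity of $c$ directly from $g(c)\in\m\cap F=0$ rather than via your transcendence contradiction, but these are equivalent. One small caveat on (c): the lemma asserts $L$ is the unique maximal subfield of $R$, not merely the unique maximal subfield \emph{containing} $k$, and your argument as written assumes $k\subseteq K$. The fix is immediate --- for an arbitrary subfield $F$ and $0\neq c\in F$, the image of $c$ in $R/\m$ is algebraic over $k$ by Zariski's lemma, so $g(c)\in\m\cap F=0$ for some nonzero $g\in k[X]$, whence $c\in L$ --- but you should state it.
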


\begin{proof}
Since $L$ is integral over $k$, it has Krull dimension zero.  It is also reduced, since it is a subring of $R$.

Now let $\p_1, \ldots, \p_n$ be the minimal prime ideals of $R$. Let $\q_i = \p_i \cap L$ for each $1 \leq i \leq n$. By reordering, we may choose $t$ with $1 \leq t \leq n$ such that $\q_1, \ldots, \q_t$ are distinct and for each $i>t$, there is some $j \leq t$ with $\q_i = \q_j$.  Then $\bigcap_{i=1}^t \q_i = \bigcap_{i=1}^n \q_i \subseteq \bigcap_{i=1}^n \p_i = 0$, and the $\q_i$ (for $i \leq t$) are pairwise comaximal since $\dim L=0$.  Hence \cite[Proposition 1.10]{AtMac-ICA} the mapping $L \rightarrow \prod_{i=1}^t L/\q_i$ is an isomorphism.  But if  $t>1$, it follows that $L$ contains a nontrivial idempotent, whence $R$ does as well, contradicting the fact that $\Spec R$ is connected.  Hence $L \onto L/\q_1$ is an isomorphism, whence $\q_1=0$ and $L$ is a field.

Let $\kappa$ be a residue field of $R$.  Then we have a sequence of ring homomorphisms $L \into R \onto \kappa$, whose composition, since $L$, $\kappa$ are fields, amounts to a field extension.

Now assume $R$ is finitely generated as a $k$-algebra. Let $F$ be a subfield of $R$.  Let $\m \in \Max R$.  Then the composition $k \ra R \ra R/\m$ is a finite algebraic extension by 
Zariski's lemma. Let $0 \neq c \in F$. Then the image of $c$ in the composition $F \ra R \ra R/\m$ is nonzero since $\m \cap F=0$, so there is a nonzero (hence one that can be replaced by a monic) polynomial $g \in k[X]$ such that $g(c)=0$.  Thus, $c\in L$, so $F \subseteq L$.
\end{proof}

\begin{example}[Shapiro]\label{ex:nouniquemaximal}
    Note that connected spectrum is a necessary condition in the previous result.  In particular, let $k=\R$ and $A = \C \times \C$, with $k$ identified as a subring of $A$ by the diagonal embedding.  Consider the elements $a=(i,i)$ and $b=(-i,i)$ of $A$.  Then $k[a] \cong \C \cong k[b]$ as $\R$-algebras. Since any field contained in $A$ must be a finite algebraic extension of $k$ (since $\dim_k A =4$), whereas $\C$ is algebraically closed, it follows that $k[a]$ and $k[b]$ are maximal subfields of $A$.  Since $b \notin k[a]$, it follows that there is no \emph{unique} maximal subfield of $A$, nor is there a unique maximal element among the fields in $A$ that contain $k$.  Moreover, $A$ is integral over $k$, and hence the integral closure of $k$ in $A$ is not a field.
\end{example}

Instead, for the disconnected spectrum case, we have the following.

\begin{prop}\label{pr:disconnbreakup}
Let $R$ be a reduced ring with only finitely many minimal primes.  Let $k$ be a field that is a subring of $R$.    Then there is a ring isomorphism $\phi: R \ra R_1 \times \cdots \times R_n$, with each $R_i$ a ring with connected spectrum that contains $k$.  Let $L$ be the integral closure of $k$ in $R$.  Then we have $\phi(L) = \prod_{i=1}^n L_i$, where for each $1\leq i \leq n$, $L_i$ is a field.  If moreover $R$ is a finitely generated $k$-algebra, then for each $i$, $L_i$ is the unique maximal subfield of $R_i$.
\end{prop}

\begin{proof}
Since each minimal prime corresponds to an irreducible component of $\Spec R$, and every irreducible component of $\Spec R$ is connected, it follows that $R$ has only finitely many connected components.  This then corresponds to a list of nontrivial idempotent elements $e_1, \ldots, e_n$ of $R$ such that $e_i e_j = 0$ for all $i<j$ and $\sum_{i=1}^n e_i = 1$.  This then gives an isomorphism $\phi: R \ra \prod_{i=1}^n R_i$ as claimed.  Note that for each $i$, $e_i$ is the multiplicative identity of $R_i$. 

Since for each $i$, $e_i$ is a root of the monic polynomial $X^2 - X \in k[X]$, $e_i$ is integral over $k$ -- i.e., $e_i \in L$.  Then $L_i = \phi(L e_i)$ is the integral closure of $\phi(k)$ in $R_i$, so by Lemma~\ref{lem:fieldconn}, $L_i$ is a field.  If $R$ is finitely generated over $k$, then each $R_i$ is finitely generated over $\phi(k)$, so again by Lemma~\ref{lem:fieldconn}, $L_i$ is the unique maximal subfield of $R_i$.
\end{proof}

The following is a generalization of Theorem~\ref{thm:uafgdomain}.
\begin{thm}\label{thm:new41}
Let $R$ be a finitely generated reduced $k$-algebra with connected prime spectrum, where $k$ is a field.  Let $L$ be the integral closure of $k$ in $R$.  Let $A=k[t, t^{-1}]$, where $t$ is an indeterminate over $k$.  Then exactly one of the following is true:
\begin{enumerate}
    \item $R$ is \ua, and the only $k$-algebra homomorphisms from $A$ to $R$ are the ones that send $t \mapsto \alpha$ for some $\alpha \in L^\times$ (i.e., $U(R) = L^\times$). None of these are injective.
    \item $R$ is not \ua, and there is an injective $k$-algebra map $\phi: A \ra R$. 
\end{enumerate}
\end{thm}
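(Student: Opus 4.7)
The plan is to exhibit a sharp dichotomy driven by whether the unit group $U(R)$ is contained in $L^\times$. First, I would observe that giving a $k$-algebra homomorphism $\phi \colon A = k[t,t^{-1}] \to R$ amounts to specifying $\phi(t) \in U(R)$, so such homomorphisms are in canonical bijection with $U(R)$. Next, I would invoke Lemma~\ref{lem:fieldconn}: since $R$ is a finitely generated reduced $k$-algebra with connected spectrum (and hence with only finitely many minimal primes), $L$ is a field and is the unique maximal subfield of $R$. In particular, $L^\times \subseteq U(R)$, every element of $L$ is algebraic (equivalently, integral) over $k$, and consequently each map $\phi$ with $\phi(t) = \alpha \in L^\times$ has nonzero kernel, namely the one generated by the minimal polynomial of $\alpha$ (which is not divisible by $t$ since $\alpha \neq 0$, so it lies in $A$).

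I would then split on whether $U(R) \subseteq L^\times$. In the first case, combined with the reverse containment noted above this upgrades to $U(R) = L^\times$. Then for any unit $u \in R$, $u - 1 \in L$, which is a field, so $u - 1$ is either $0$ (hence nilpotent) or a unit of $L$ (hence of $R$). By Proposition~\ref{pr:characterizeua}, $R$ is \ua, placing us in conclusion (1); the description of the $k$-algebra maps $A \to R$ and their non-injectivity follows immediately from the identification above.

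For the complementary case $U(R) \not\subseteq L^\times$, I would pick $u \in U(R) \setminus L$. Since $L$ is the integral closure of $k$ in $R$, and algebraic equals integral over the field $k$, the element $u$ must be transcendental over $k$. Thus the map $A \to R$ sending $t \mapsto u$ is injective, giving conclusion (2). To confirm the two cases are mutually exclusive, I would argue that if $R$ were \ua\ here, Proposition~\ref{pr:characterizeua}(4) (available because $R$ is reduced) would make $U(R) \cup \{0\}$ a subfield of $R$, which by the maximality clause of Lemma~\ref{lem:fieldconn} would be forced into $L$, contradicting the choice of $u \in U(R) \setminus L$.

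The main obstacle is conceptual rather than computational: one has to assemble exactly the right ingredients, namely the ``$u - 1$ is a unit or nilpotent'' reformulation of unit-additivity together with the uniqueness of $L$ as a maximal subfield under the connected-spectrum hypothesis. Without connectedness, $R$ could harbor multiple non-conjugate maximal subfields (as the example $\R \subseteq \C \times \C$ immediately before the theorem illustrates), and the clean dichotomy would break down. Once these two inputs are in hand, everything else is bookkeeping.
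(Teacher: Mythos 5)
Your proof is correct and follows the route the paper intends: the paper's own proof simply defers to the argument for Theorem~\ref{thm:uafgdomain} (the integral-domain case from \cite{nmeSh-unitadd}), with Lemma~\ref{lem:fieldconn} supplying the fact that $L$ is a field and the unique maximal subfield of $R$ under the reduced, connected-spectrum hypotheses --- exactly the two ingredients you identify and combine with Proposition~\ref{pr:characterizeua}. You have essentially written out in full the argument the paper leaves implicit, including the correct observation that ``not integral over $k$'' forces transcendence even when $R$ is not a domain (since $k$ is a field, any algebraic dependence can be made monic).
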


\begin{proof}
If $R$ is \ua, the proof of (1) is identical to the proof of Theorem~\ref{thm:uafgdomain}, using the fact that by Lemma~\ref{lem:fieldconn}, $L$ is the unique maximal subfield of $R$ containing $k$.

If $R$ is not \ua, then again the proof of (2) is identical to the proof of Theorem~\ref{thm:uafgdomain}.
\end{proof}

Next, I generalize Theorem~\ref{thm:oldfund}.
\begin{thm}\label{thm:new42}
Let $X$ be a (not necessarily connected or irreducible) affine variety over an algebraically closed field $k$. Let $\G_m = \A^1_k \setminus \{\mathbf 0\}$.  Then exactly one of the following is true:
\begin{enumerate}
    \item $X$ is locally \ua, and all $k$-scheme morphisms $X \ra \G_m$ are locally constant.
    \item $X$ is \emph{not} locally \ua, and there is a dominant $k$-scheme morphism $X \ra \G_m$.  Any such map has cofinite image.
\end{enumerate}
\end{thm}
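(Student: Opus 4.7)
The plan is to reduce to the connected case handled by Theorem~\ref{thm:new41} via the decomposition of an affine variety into its (finitely many) connected components, and then exploit that over an algebraically closed field the only elements of a reduced finitely generated $k$-algebra that are integral over $k$ are elements of $k$ itself.

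First I would write $X = X_1 \coprod \cdots \coprod X_n$ as the disjoint union of its connected components, yielding a product decomposition $\cO(X) = R_1 \times \cdots \times R_n$ in which each $R_i = \cO(X_i)$ is a reduced finitely generated $k$-algebra with connected spectrum. By Proposition~\ref{pr:schemelocua}, $X$ is locally \ua\ if and only if each $R_i$ is \ua. Next, a $k$-scheme morphism $X \ra \G_m$ is dual to a $k$-algebra homomorphism $A := k[t,t^{-1}] \ra \prod_i R_i$, and since $\Spec A$ is connected each such map factors uniquely as a tuple $(\phi_1, \ldots, \phi_n)$ with $\phi_i: A \ra R_i$ corresponding to the restriction of the morphism to $X_i$.

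In the locally \ua\ case, each $R_i$ is \ua, so Theorem~\ref{thm:new41}(1) forces $\phi_i(t) \in L_i^\times$, where $L_i$ is the integral closure of $k$ in $R_i$. The key observation is that since $k = k^\alg$ and each $R_i$ is a finitely generated $k$-algebra, every element of $R_i$ integral over $k$ already lies in $k$, so $L_i = k$. Hence $\phi_i$ factors through $k \hookrightarrow R_i$, the restriction $X_i \ra \G_m$ is a constant morphism, and therefore $X \ra \G_m$ is locally constant. In the non-locally-\ua\ case, I would pick an index $i$ with $R_i$ not \ua\ and invoke Theorem~\ref{thm:new41}(2) to obtain an injective $\phi_i : A \ra R_i$; then I extend to $\phi: A \ra \prod_j R_j$ by sending $t$ to the tuple that equals $\phi_i(t)$ in slot $i$ and equals $1$ in every other slot (this is a well-defined $k$-algebra map because each slot sends $t$ to a unit). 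The resulting morphism $X \ra \G_m$ restricts to the dominant morphism $X_i \ra \G_m$ dual to $\phi_i$, so its image already contains a dense subset of $\G_m$, proving dominance.

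For the final cofinite-image assertion, which must hold for \emph{any} dominant morphism $f: X \ra \G_m$, I would appeal to Chevalley's theorem: the image $f(X)$ is constructible in the Noetherian one-dimensional irreducible variety $\G_m$, and any dense constructible subset of $\G_m$ must contain a nonempty Zariski-open subset, whose complement is a proper closed (hence finite) subset of points; the complement of $f(X)$ is contained in this finite set. The main obstacle I anticipate is not any deep difficulty — Theorem~\ref{thm:new41} has already done the hard algebraic work and Proposition~\ref{pr:schemelocua} handles the component-bookkeeping — but rather verifying cleanly that the padded map from the non-locally-\ua\ case genuinely yields a dominant morphism on all of $X$, which requires only that dominance of the restriction to $X_i$ suffices since $X_i$ is open in $X$ and dominance is a statement about density of the set-theoretic image.
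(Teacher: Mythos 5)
Your proposal is correct and follows essentially the same route as the paper: decompose $X$ into connected components, apply Proposition~\ref{pr:schemelocua} together with Theorem~\ref{thm:new41} on each factor (noting $L_i=k$ since $k$ is algebraically closed), and pad the injective map $k[t,t^{-1}]\ra R_i$ with $1$'s in the other slots. The only divergence is the cofinite-image step, where you invoke Chevalley's theorem directly on the constructible dense image in the curve $\G_m$, whereas the paper restricts $\mu$ to an irreducible component on which it remains dominant and cites Theorem~\ref{thm:oldfund}(2); both are valid, and yours is marginally more self-contained.
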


\begin{proof}
Write $X = \Spec R$, $R = R_1 \times \cdots \times R_n$ with each $R_i$ a ring with connected spectrum, and $X_i = \Spec R_i$. Let $\gamma_i: X_i \into X$ be the inclusion maps.  

Suppose $X$ is locally \ua. By Proposition~\ref{pr:schemelocua}, each $R_i$ is \ua.  Let $g: X \ra \G_m$ be a $k$-scheme morphism and $g_i = g \circ \gamma_i$.  Then by Theorem~\ref{thm:new41}, $g_i(t) = \alpha_i$ for some $\alpha_i \in k^\times$.  Thus, $g_i$ is the constant map that sends every $x \in X_i$ to $\alpha_i$.  Hence, $g$ is locally constant.

Now suppose $X$ is not locally \ua.  Then by Proposition~\ref{pr:schemelocua}, there is some $1\leq j \leq n$ with $X_j$ not \ua.  By Theorem~\ref{thm:new41}, there is an injective $k$-algebra map $\phi: k[t, t^{-1}] \ra R_j$.  Let $\psi: k[t, t^{-1}] \ra R$ be the $k$-algebra morphism such that $\psi(t) = (1,\ldots, 1, \phi(t),1, \ldots, 1)$.  Then $\psi$ is injective, so the corresponding $k$-scheme morphism $X \ra \G_m$ is dominant.

Now, let $\mu: X \ra \G_m$ be any dominant $k$-scheme morphism.  Since $\G_m$ is irreducible, it follows that there is some irreducible component $Y$ of $X$ such that $\mu \circ i: Y \ra \G_m$ is dominant, where $i: Y \into X$ is the inclusion.  Then by Theorem~\ref{thm:oldfund}(2), the image of $\mu \circ i$ is cofinite.  Hence the image of $\mu$ is also cofinite. 
\end{proof}

\begin{thm}\label{thm:FTAiffua}
Let $V \subseteq k^n$ be an affine algebraic set, where $k$ is an infinite field.  Let $I=I(V)$ and $R = k[X_1, \ldots, X_n]/I$.  Then $V$ is fundamental (resp. locally fundamental) if and only if $R$ is \ua\ (resp. locally \ua).
\end{thm}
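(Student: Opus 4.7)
The plan is to translate fundamentality into a condition on the unit group of $R$, reduce via connected components to the case where $V$ is Zariski-connected, and invoke Theorem~\ref{thm:new41}.

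First, set $\bar R := R \otimes_k k^\alg$. By Lemma~\ref{lem:closure} together with the Nullstellensatz, the coordinate ring of $W$ is $\bar R_{\red}$, so a polynomial $f$ has a zero on $W$ if and only if $\bar f \otimes 1$ is a non-unit in $\bar R$ (nilpotents do not affect the unit group). Since $k \to k^\alg$ is faithfully flat, so is $R \to \bar R$, and standard descent gives that $r \in R$ is a unit if and only if $r \otimes 1 \in \bar R$ is a unit (both are equivalent to $R/rR = 0$ via the faithfully flat triviality criterion $M \otimes_k k^\alg = 0 \Leftrightarrow M = 0$). Combined with the elementary fact that $f$ is constant on $V$ if and only if $\bar f \in k \subseteq R$ (for $V \neq \emptyset$; the case $V = \emptyset$ gives $R = 0$ and both sides of the theorem are vacuous), the definition of fundamentality becomes the algebraic condition $U(R) = k^\times$.

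Next, suppose $V$ is Zariski-connected. I claim the integral closure $L$ of $k$ in $R$ equals $k$: any $c \in L$ satisfies some monic $p \in k[Y]$, and any lift $f$ of $c$ then has $p(f) \in I(V)$, so $f$ maps $V$ into the finite set of $k$-roots of $p$; the closed fibers partition $V$, and connectedness forces a single fiber, giving $c \in k$. An analogous idempotent argument (a nontrivial idempotent of $R$ produces a partition of $V$ into nonempty closed subsets, and conversely by the CRT construction in the proof of Proposition~\ref{pr:reduceFTA}) shows $V$ is connected if and only if $\Spec R$ is. Theorem~\ref{thm:new41} then says $R$ is \ua\ if and only if $U(R) = L^\times = k^\times$, which by the first paragraph is if and only if $V$ is fundamental.

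Third, decompose $V = V_1 \coprod \cdots \coprod V_t$ into its (finitely many, by Noetherianity) connected components; the CRT argument in the proof of Proposition~\ref{pr:reduceFTA} yields $R \cong \prod_i R_i$ with $R_i = k[X_1,\ldots,X_n]/I(V_i)$ and each $\Spec R_i$ connected. For the local equivalence, Proposition~\ref{pr:reduceFTA} with Lemma~\ref{lem:FTAconnlocal} reduces ``$V$ locally fundamental'' to ``each $V_i$ fundamental'', while iterated Corollary~\ref{cor:locuaprod} together with Corollary~\ref{cor:connua} reduces ``$R$ locally \ua'' to ``each $R_i$ \ua'', and these now match by the connected case. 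For the non-local equivalence, I verify that $V$ fundamental forces $t = 1$: if $t \geq 2$, CRT produces $g_1, g_2 \in k[X_1, \ldots, X_n]$ with $g_1 + g_2 = 1$, $g_1|_{V_1} = 1$, $g_1|_{V_2} = 0$, and picking any $\alpha \in k \setminus \{0, 1\}$ (available since $k$ is infinite), the polynomial $f := g_1 + \alpha g_2$ takes the distinct nonzero values $1$ and $\alpha$ on $W_1$ and $W_2$ by Zariski density, contradicting fundamentality. Corollary~\ref{cor:connua} dually gives that $R$ \ua\ forces $\Spec R$ connected, so both sides of the non-local equivalence again reduce to the connected case. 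The main obstacle is the first-paragraph bridge between the geometric ``nowhere-vanishing on $W$'' and the algebraic ``unit of $R$'', requiring both the identification of the coordinate ring of $W$ with $\bar R_{\red}$ and the faithful-flatness descent of units; with that bridge in place, the remaining reductions are formal.
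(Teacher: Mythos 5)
Your architecture is essentially the paper's: reduce to the Zariski-connected case via Proposition~\ref{pr:reduceFTA}, Lemma~\ref{lem:FTAconnlocal}, and Corollaries~\ref{cor:connua} and~\ref{cor:locuaprod}, then settle the connected case using the weak Nullstellensatz, faithful flatness of $k\to k^\alg$, and Theorem~\ref{thm:new41}. Within the connected case your route differs slightly and pleasantly: the paper proves the forward direction ($V$ fundamental $\Rightarrow$ $R$ unit-additive) by hand and invokes Theorem~\ref{thm:new41} plus Lemma~\ref{lem:fieldconn} only for the converse, whereas you first isolate the dictionary ``$V$ fundamental $\iff U(R)=k^\times$'' and then prove $L=k$ directly by your fiber-partition argument (finitely many disjoint closed fibers over the $k$-roots of the minimal polynomial, forced to collapse by connectedness). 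Both are sound; yours makes the unit-group translation explicit once, and it actually writes out the disconnected non-local case, which the paper leaves implicit. (One micro-step to add: in deducing ``unit-additive $\iff U(R)=L^\times$'' from Theorem~\ref{thm:new41}, the ``if'' direction needs the observation that in case (2) the element $\phi(t)$ is a unit transcendental over $k$, hence not in $L$.)

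Two justifications need repair. First, ``Corollary~\ref{cor:connua} dually gives that $R$ unit-additive forces $\Spec R$ connected'' is not what that corollary says, and the implication is false for general rings ($\F_2\times\F_2$ is UU, hence unit-additive, with disconnected spectrum). The correct route in your setting is Proposition~\ref{pr:uaprod}: a nontrivial product is unit-additive only if each factor is UU, and a nonzero reduced algebra over the infinite field $k$ is never UU because distinct elements of $k^\times$ stay distinct modulo nilpotents. Second, your biconditional ``$V$ is connected iff $\Spec R$ is'' is only valid in general in the direction you need for Theorem~\ref{thm:new41} (a nontrivial idempotent of $R$ disconnects $V$); the reverse direction, like the splitting $R\cong\prod_i R_i$ and the existence of your $g_1,g_2$, rests on the claim $I(V_i)+I(V_j)=(1)$ for disjoint closed subsets of $k^n$. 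That claim is exactly what Proposition~\ref{pr:reduceFTA} asserts, so you are on the same footing as the paper there, but it is not automatic over a non-algebraically-closed field (for instance $I_\R(V(Y-X^2))+I_\R(V(Y+X^2+1))=(Y-X^2,\,2X^2+1)\neq(1)$), so be aware that this is where the weight of the reduction sits.
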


\begin{proof}
By Lemma~\ref{lem:FTAconnlocal}, Proposition~\ref{pr:reduceFTA}, and Corollaries~\ref{cor:connua} and \ref{cor:locuaprod}, we may assume $V$ is connected and dispense with the adverb ``locally''.

Suppose $V$ is a fundamental set. Let $r \in R$ be a unit.  Let $F \in k[X_1, \ldots, X_n]$ with $r = F+I$.  Set $S := R \otimes_k k^\alg$, and let $W$ be the Zariski closure of $V$ in $(k^\alg)^n$.  Then since $R=rR$, we have $S/FS \cong (R/rR) \otimes_k k^\alg=0$, so $s := \bar F \in S$ generates the unit ideal of $S$.  Hence, $F$ is nonvanishing on $W$, so by assumption, $F$ is constant on $V$. Say $\lambda \in k^\times$ such that $F(p) =\lambda$ for all $p \in V$.  Then $F-\lambda \in I(V) =I$, so that in $R$, we have $r=\bar F = \lambda \in k$.  Hence $r+1\in k$ is either a unit or zero, so $R$ is \ua.

Conversely suppose $R$ is \ua.  Let $F \in k[X_1, \ldots, X_n]$ be a polynomial that does not vanish anywhere on $W$.  Then by the weak Nullstellensatz, $\bar F$ is a unit in $S$.  Thus, \[
0 = S/\bar FS \cong (R/\bar FR) \otimes_k k^\alg.
\]
Then since $k^\alg$ is faithfully flat over $k$, we have $R / \bar FR = 0$, so that $\bar F$ is a unit of $R$.  But $R$ is \ua.  Moreover, $k$ is the field of units of $R$. To see this, let $L$ be the integral closure of $k$ in $R$, which by Theorem~\ref{thm:new41} is the field of units of $R$.  Let $p\in V$. Then $k \cong R/I(p)$ is a residue field of $R$, so by Lemma~\ref{lem:fieldconn} we have $k \subseteq L \subseteq k$, whence $k=L$.  Therefore, $\bar F \in k^\times$ in $R$, so that $F$ is constant on $V$.  Thus, $V$ is a fundamental set.
\end{proof}

\begin{question}
    What about non-affine varieties?  If $k$ is an algebraically closed field and $X$ is a $k$-variety such that all nonvanishing $k$-scheme morphisms $X \ra \A^1_k$ are locally constant, does it follow that $X$ is locally \ua?
    
    It is classical that for any irreducible projective variety $X$ over an algebraically closed field $k$, the only $k$-scheme morphisms from $X$ to $\A^1_k$ (hence also all $k$-scheme morphisms to $\G_m(k)$) are constant.  So are all such varieties locally \ua?
\end{question}

\section{Geometric unit-additivity and base change}\label{sec:geoua}
A basic tenet of modern algebraic geometry is that with any important property, one should learn how it behaves under base change by field extensions.  In this section, we will do so (at the affine level) with unit-additivity, which then leads to the notion of \emph{geometric unit-additivity over a field}.
\begin{defn}
Let $R$ be a ring containing a field $K$, and let $r \in R$.  Say $r$ is \emph{\pins\ over $K$} if either $r\in K$, or else $\chr R = p>0$ and $r^{p^n} \in K$ for some $n \in \N$.
\end{defn}

\begin{rem}\label{rem:sep}
If $\chr R = p>0$, then even if $K$ is perfect (or even algebraically closed), one can have elements of $R \setminus K$ \pins\ over $K$, provided $R$ is not reduced.  In fact, when $K$ is perfect, then \emph{$r\in R$ is \pins\ over $K$ if and only if $r$ is nilpotent mod $K$.}

To see this, first note that if $c \in K$ such that $r-c$ is nilpotent, then there is some $n$ with $(r-c)^{p^n} = 0$, so $r^{p^n} = c^{p^n} \in K$.  Conversely, suppose $r$ is \pins\ over $K$.  Then there is some $n \in \N$ and $c \in K$ with $r^{p^n} = c$.  Let $b \in K$ be a $p^n$th root of $c$, which exists in $K$ because $K$ is perfect.  Then $r^{p^n} = b^{p^n}$, so $(r-b)^{p^n}=0$, and $r-b$ is nilpotent. Hence $r$ is nilpotent mod $K$.
\end{rem}

\begin{prop}\label{pr:Sawin}
Let $K$ be an algebraically closed field, and let $R$ be a \ua\ ring containing $K$ such that all units of $R$ are in $K$ mod nilpotents.  Let $L$ be a reduced ring that contains $K$.  Then all the units of $S=R \otimes_K L$ are in $L$ mod nilpotents.  Hence, if $L$ is a field, then $S$ is \ua\ with all units in $L$ mod nilpotents.
\end{prop}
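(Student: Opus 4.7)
The plan is to reduce to the case where $R$ is a reduced, finitely generated $K$-algebra, and then use density of $K$-rational points (via Hilbert's Nullstellensatz), together with the hypothesis $R^\times \subseteq K^\times$ modulo nilpotents, to pin any unit of $R \otimes_K L$ down to an element of $L$.

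First I would replace $R$ by $R_\red$. This is still \ua\ by Proposition~\ref{pr:modoutnilpotents}; its units lift to units of $R$ (since $1$ plus nilpotent is a unit), so they still lie in $K$. The surjection $R \otimes_K L \twoheadrightarrow R_\red \otimes_K L$ has kernel generated by elementary tensors $n \otimes \ell$ with $n$ nilpotent, and this ideal is nil (a finite sum of nilpotents in a commutative ring is nilpotent by the usual pigeonhole estimate). Thus it suffices to handle the reduced case, and I henceforth assume $R$ reduced. Since $K$ is algebraically closed (hence perfect), $R$ is geometrically reduced over $K$, so $S := R \otimes_K L$ is reduced. Given a unit $u \in S$ with inverse $v$, both lie in $R_0 \otimes_K L_0$ for some finitely generated $K$-subalgebras $R_0 \subseteq R$ and $L_0 \subseteq L$; these inherit reducedness, and $R_0^\times \subseteq R^\times \subseteq K^\times$.

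The core step is to show that a unit $u \in T := R_0 \otimes_K L_0$ lies in $1 \otimes L_0$. Because $T$ is a reduced finitely generated $K$-algebra and $K$ is algebraically closed, Hilbert's Nullstellensatz furnishes $\bigcap_{\m \in \Max T} \m = 0$ together with the identification $\Max T = \Max R_0 \times \Max L_0$, every residue field being $K$. For each $y \in \Max L_0$, the evaluation $\mathrm{ev}_y : T \to R_0$ sends the unit $u$ to a unit of $R_0$, which by hypothesis lies in $K$. Fix $x_0 \in \Max R_0$, set $\ell_0 := \mathrm{ev}_{x_0}(u) \in L_0^\times$, and for any $(x, y) \in \Max R_0 \times \Max L_0$ compute $(u - 1 \otimes \ell_0)(x, y) = \mathrm{ev}_y(u) - \ell_0(y) = \mathrm{ev}_y(u) - \mathrm{ev}_y(u) = 0$, using that $\mathrm{ev}_y(u) \in K$ is insensitive to the first-coordinate evaluation. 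Thus $u - 1 \otimes \ell_0$ lies in every maximal ideal of $T$, hence equals $0$, giving $u \in L_0 \subseteq L$.

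For the final assertion, assume $L$ is a field. Every unit $u \in S$ then equals $\ell + n$ with $\ell \in L$ and $n$ nilpotent; because $L \hookrightarrow S_\red$ and the image of $u$ in $S_\red$ is a unit, $\ell$ must be nonzero, hence $\ell \in L^\times$. Given units $u_i = \ell_i + n_i$ ($i = 1, 2$) of $S$, their sum $(\ell_1 + \ell_2) + (n_1 + n_2)$ is either a nonzero element of $L^\times$ plus a nilpotent (and so a unit of $S$), or a sum of nilpotents (and so nilpotent), confirming that $S$ is \ua. The step I expect to be the principal obstacle is the core step above: the hypothesis $R^\times \subseteq K^\times$ is precisely what forces each partial evaluation $\mathrm{ev}_y(u)$ to be \emph{constant} along $\Max R_0$, and only the combination of this rigidity with density of $K$-points in the reduced Jacobson ring $T$ enables the descent of $u$ into $L_0$.
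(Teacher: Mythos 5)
Your proof is correct, but the core step runs along a genuinely different track from the paper's. The paper writes a unit $u=\sum_i r_i\otimes b_i$ with the $r_i$ linearly independent over $K$ and $r_0=1$, shrinks only the $L$-side to the finitely generated reduced subalgebra $A=K[b_i,d_j]$, and uses the Hilbert-ring property plus the Nullstellensatz to produce a \emph{single} $K$-point $f:A\onto K$ with $f(b_j)\neq 0$ for a hypothetical nonzero $b_j$ ($j>0$); applying $1_R\otimes f$ gives a unit of $R$, hence an element of $K$, and linear independence of the $r_i$ forces $f(b_j)=0$, a contradiction. You instead shrink both sides, identify $\Max(R_0\otimes_K L_0)$ with $\Max R_0\times\Max L_0$, use the hypothesis to see that every partial evaluation $\mathrm{ev}_y(u)$ is a constant, and conclude that $u-1\otimes\ell_0$ lies in the (zero) Jacobson radical of the reduced Jacobson ring $T=R_0\otimes_K L_0$. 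The trade-off: your route needs $T$ to be reduced, i.e.\ the standard fact that over a perfect field a tensor product of reduced algebras is reduced --- you assert this for all of $S$ rather breezily, and since $L$ is only a reduced ring and nothing is finitely generated, the clean way is to note that only reducedness of $T$ is needed, which follows by embedding $L_0$ into a finite product of fields and using geometric reducedness of $R_0$. The paper's argument avoids any such reducedness claim about tensor products and never shrinks $R$. On the other hand your version is more transparently geometric: it exhibits $u$ as a function on $\Max R_0\times\Max L_0$ that is constant along the $R_0$-direction. Your opening reduction modulo nilpotents (done up front rather than at the end, with the observation that the kernel of $S\onto R_\red\otimes_K L$ is a nil ideal) and your final deduction that $S$ is unit-additive when $L$ is a field both match the paper's reasoning.
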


\begin{proof}
First suppose $R$ is reduced, so that all units of $R$ are in $K^\times$. Let $u$ be a unit in $S$.  Then there is a sequence $r_0, \ldots, r_n \in R$ linearly independent over $K$, which can be chosen so that $r_0=1$, and elements $b_0, \ldots, b_n \in L$ such that $u = \sum_{i=0}^n r_i \otimes b_i$.  Suppose there is some $j>0$ such that $b_j \neq 0$.

Since $u$ is a unit, there is some $u' \in S$ with $uu' =1$.  We have $u' = \sum_{i=0}^m c_i \otimes d_i$ with $c_i \in R$ and $d_i \in L$.  Set $A := K[b_0, \ldots, b_n, d_0, \ldots, d_m]$.  Note that $A$ is a subring of $L$ that is finitely generated as a $K$-algebra.  Since $b_j \neq 0$ and $A$ is a Hilbert ring, there is some maximal ideal $\m$ of $A$ such that $b_j \notin \m$.  By the Nullstellensatz (since $A$ is reduced), $A/\m \cong K$.  Hence, we have a surjective $K$-algebra map $f: A \onto K$ with $f(b_j) \neq 0$.  Let $g = 1_R \otimes f: R \otimes_K A \rightarrow R$.  Write $B = R \otimes_K A$.  Note that all the specifically named simple tensors above are elements of $B$.  Since $u \in B$ is a unit, $g(u)$ is a unit of $R$.  But then $g(u) \in K^\times$.  We have $g(u) = \sum_{i=0}^n f(b_i) r_i = f(b_0) + \sum_{i=1}^n f(b_i)r_i$.  Then we get a $K$-linear relationship in $R$ among the $r_i$ via $0 = (f(b_0) - g(u)) \cdot 1 + \sum_{i=1}^n f(b_i)r_i$. Hence all the $f(b_i) = 0$ for $i>0$.  But this contradicts $f(b_j) \neq 0$.

Hence, $b_j=0$ for all $j>0$, so $u=b_0 \in L$.

Now let us generalize to the case where $R$ may not be reduced.  Let $N$ be the nilradical of $R$.  Let $u$ be a unit of $S$.  Then $\bar u \in S/NS$ is a unit, and $S/NS \cong (R/N) \otimes_K L$.  Hence by the reduced case above, $\bar u \in L$, so there is some $\lambda \in L$ such that $u-\lambda \in NS$.  Since $NS \subseteq $ the nilradical of $S$, it follows that all the units of $S$ are in $L$ mod nilpotents.

The last statement holds because if $L$ is a field, all its nonzero elements are units of $S$.
\end{proof}

Here is what must be a well-known lemma.

\begin{lemma}\label{lem:tensor}
    Let $K$ be a field, let $A,B$ be $K$-algebras, and let $a\in A$, $b \in B$ such that $a \otimes 1 = 1 \otimes b$ as elements of $A \otimes_K B$.  Then $a=b \in K$.
\end{lemma}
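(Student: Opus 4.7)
The plan is to exploit the fact that every $K$-vector space is free and every linearly independent subset extends to a basis, in order to expand both sides of $a \otimes 1 = 1 \otimes b$ in a common $B$-basis of $A \otimes_K B$. Assuming $A$ is nonzero (and similarly $B$; the zero case is either trivial or vacuous depending on interpretation, since the structure map $K \to A$ is either injective or has kernel all of $K$), the element $1 \in A$ is nonzero and so extends to a $K$-basis $\{1\} \cup \{a_i\}_{i \in I}$ of $A$. Freeness of $A$ as a $K$-module then implies that $A \otimes_K B$ is free as a $B$-module on the basis $\{1 \otimes 1\} \cup \{a_i \otimes 1\}_{i \in I}$.

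The next step is routine bookkeeping. Write $a = \lambda \cdot 1 + \sum_{i \in J} \mu_i a_i$ with $\lambda, \mu_i \in K$ and $J \subseteq I$ finite. Then in the $B$-basis above,
\[
a \otimes 1 \;=\; \lambda \cdot (1 \otimes 1) + \sum_{i \in J} \mu_i \cdot (a_i \otimes 1),
\]
while $1 \otimes b$ has expansion $b \cdot (1 \otimes 1)$ with coefficient $0$ at every $a_i \otimes 1$. By uniqueness of expansion in the free module $A \otimes_K B$, equating coefficients yields $b = \lambda$ in $B$ and $\mu_i = 0$ for each $i \in J$. Hence $a = \lambda \in K$ and $b = \lambda \in K$, which is the desired conclusion $a = b \in K$.

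There is no genuine obstacle; the only point worth flagging is that the argument relies crucially on $K$ being a field (so that $A$ is $K$-free and $1$ can be extended to a basis) rather than merely a commutative ring. An alternative route would be to apply $\text{id}_A \otimes \phi$ for an arbitrary $K$-linear functional $\phi: B \to K$ with $\phi(1) = 1$ to convert the identity into $a = \phi(b) \cdot 1$ in $A$, and then symmetrize in $A$; but the direct basis argument above is cleaner and avoids choices of functionals.
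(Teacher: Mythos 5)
Your proof is correct and follows essentially the same route as the paper's: expand both sides in a basis of the tensor product and equate coefficients. The only cosmetic difference is that you extend $\{1\}$ to a basis of $A$ alone and use freeness of $A\otimes_K B$ as a $B$-module, whereas the paper chooses bases of both $A$ and $B$ containing $1$ and uses the product $K$-basis of $A\otimes_K B$; both arguments are equally valid.
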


\begin{proof}
Let $\cA$ be a basis of $A$ as a $K$-vector space such that $1\in \cA$.  Let $\cB$ be a basis of $B$ as a $K$-vector space such that $1\in \cB$.  There exist distinct $a_1, \ldots, a_n \in \cA \setminus \{1\}$ and unique $c_0, c_1,\ldots, c_n \in K$ with $a = c_0 + \sum_{i=1}^n c_i a_i$.  Similarly, there exist distinct $b_1, \ldots, b_m \in \cB \setminus \{1\}$ and unique $d_0, d_1, \ldots, d_m \in K$ with $b = d_0 + \sum_{i=1}^m d_i b_i$. Thus, \[
0 = a \otimes 1 - 1 \otimes b = (c_0 - d_0) \cdot (1 \otimes 1) + \sum_{i=1}^n c_i (a_i \otimes 1) - \sum_{i=1}^m d_i (1 \otimes b_i).
\]
Since $\cA \otimes \cB = \{\alpha \otimes \beta \mid \alpha \in \cA, \beta \in \cB\}$ is a basis for $A \otimes_K B$ as a $K$-vector space, it follows that $c_i=0$ for all $i\geq 1$, $d_i=0$ for all $i\geq 1$, and $c_0 = d_0$.  Thus, $a = c_0 = d_0 = b \in K$.
\end{proof}

\begin{lemma}\label{lem:fielddescentchar0}
Let $R$ be a $K$-algebra, where $K$ is a field of characteristic zero, and $L/K$ is a field extension such that $S:=R \otimes_K L$ is \ua\ with all units in $L$ mod nilpotents.  Then all units of $R$ are in $K$ mod nilpotents.
\end{lemma}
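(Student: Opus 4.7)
The plan is: take a unit $u \in R$, transport it to $u \otimes 1 \in S$, apply the hypothesis to produce $\lambda \in L$ with $u \otimes 1 - 1 \otimes \lambda$ nilpotent in $S$, and then argue via a $K$-basis expansion of $L$ that in fact $\lambda$ may be chosen inside $K$ and the resulting difference $u - \lambda$ is already nilpotent in $R$. The entire argument will be an exercise in bookkeeping once the right decomposition is in place, so the real work is in the preparatory identity below.

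The central ingredient is the equality
$$
N(S) \;=\; N(R) \otimes_K L,
$$
where $N(\cdot)$ denotes the nilradical. This holds because $K$, being of characteristic zero, is perfect, so $R_{\red} \otimes_K L$ is reduced; then flatness of $L$ over $K$ applied to $0 \to N(R) \to R \to R_{\red} \to 0$ identifies the kernel $N(R) \otimes_K L$ of $S \to R_{\red} \otimes_K L$ with the full nilradical $N(S)$. I expect this identity to be the main obstacle, and indeed to be the point where characteristic zero is essential: in positive characteristic, inseparability in $L/K$ can introduce genuine new nilpotents in $S$ that do not descend to $R$, and the conclusion fails without further hypotheses.

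Granted the identity, fix a $K$-basis $\{e_i\}_{i \in I}$ of $L$ with $e_{i_0} = 1$, and expand $\lambda = c + \sum_{i \neq i_0} \lambda_i e_i$ with $c, \lambda_i \in K$ (finitely many nonzero). Under the direct-sum decomposition $R \otimes_K L = \bigoplus_i R \cdot (1 \otimes e_i)$ of $R$-modules, the element
$$
u \otimes 1 \;-\; 1 \otimes \lambda \;=\; (u - c) \otimes 1 \;-\; \sum_{i \neq i_0} \lambda_i (1 \otimes e_i)
$$
has $e_{i_0}$-coordinate $u - c$ and $e_i$-coordinate $-\lambda_i \in K$ for $i \neq i_0$. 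Membership in $N(R) \otimes_K L = \bigoplus_i N(R) \cdot (1 \otimes e_i)$ forces every coordinate to lie in $N(R)$. Since $K \cap N(R) = 0$ in $R$ (any element of $K \subseteq R$ is either zero or a unit, and no unit is nilpotent in a nonzero ring), we conclude $\lambda_i = 0$ for all $i \neq i_0$, hence $\lambda = c \in K$, and simultaneously $u - c \in N(R)$. Thus $u \equiv c \pmod{N(R)}$, proving that every unit of $R$ lies in $K$ modulo nilpotents.
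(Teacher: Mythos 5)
Your proof is correct and follows essentially the same route as the paper's: both arguments rest on the fact that $R_\red\otimes_K L$ is reduced in characteristic zero and on a $K$-basis expansion of the tensor product (the paper packages the latter as Lemma~\ref{lem:tensor} and first reduces to the reduced case, whereas you merge the two steps via the identity $N(S)=N(R)\otimes_K L$). No gaps.
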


\begin{proof}
First assume $R$ is reduced.  Let $u$ be a unit of $R$.  Then $u \otimes 1$ is a unit of $S$.  Since $\chr K=0$, we have that $L/K$ is separable, whence $S$ is reduced.  Thus, there is some $c\in L^\times$ with $u \otimes 1 = 1 \otimes c$.  By Lemma~\ref{lem:tensor}, it follows that $u=c \in K$.

Now pass to the case where $R$ may not be reduced.  Let $N$ be the nilradical of $R$.  Then by Proposition~\ref{pr:modoutnilpotents}, $S/NS$ is \ua.  Moreover, if $u \in S$ such that $\bar u \in S/NS$ is a unit, then $u$ is a unit of $S$, so there is some nilpotent $n\in S$ with $u-n \in L$.  Since $S/NS \cong (R/N) \otimes_K L$ is reduced, it follows that $n \in NS$, whence $\bar u \in L$. Then by the first paragraph of the proof, $R/N$ is \ua\ with field of units $K$.  Hence all units of $R$ are in $K$ mod nilpotents.

To see that $R$ is \ua, let $u$ be a unit of $R$.  Then there is some nilpotent element $n$ with $u-n \in K$.  Thus also $u+1-n = c$ for some $c\in K$. If $c=0$, then $u+1=n$ is nilpotent.  Otherwise $u+1=n+c$ is a unit, being the sum of a nilpotent $n$ and a unit $c$.
\end{proof}

\begin{lemma}\label{lem:fielddescentcharp}
    Let $R$ be a $K$-algebra, $K$ a field of characteristic $p>0$, and $L/K$ a field extension such that $S :=R \otimes_K L$ is 
    \ua\ with 
    all units \pins\ over $L$.  Then $R$ is 
    \ua\ with 
    all units \pins\ over $K$.
\end{lemma}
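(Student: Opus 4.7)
The plan is to follow the structure of Lemma~\ref{lem:fielddescentchar0}: first dispose of the reduced case using Lemma~\ref{lem:tensor}, then bootstrap to the general case by modding out the nilradical. The role played in characteristic zero by separability of $L/K$ will here be taken over by the Frobenius, which pairs naturally with the hypothesis that units of $S$ are \pins\ over $L$.

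Assume first that $R$ is reduced. Given a unit $u \in R$, the element $u \otimes 1$ is a unit of $S$, so by hypothesis there exist $n \geq 0$ and $c \in L$ with $(u \otimes 1)^{p^n} = u^{p^n} \otimes 1 = 1 \otimes c$. Lemma~\ref{lem:tensor} then forces $u^{p^n} = c \in K$, proving $u$ is \pins\ over $K$. To obtain unit-additivity, apply Frobenius to $u+1$: $(u+1)^{p^n} = u^{p^n} + 1 = c+1 \in K$. This is either zero (so $u+1$ is nilpotent, hence zero since $R$ is reduced) or a nonzero element of $K \subseteq R$, in which case $u+1$ is itself a unit of $R$. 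Proposition~\ref{pr:characterizeua} then yields that $R$ is \ua.

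For the general case, let $N$ be the nilradical of $R$. Then $S/NS \cong (R/N) \otimes_K L$, and $NS$ is a nil ideal of $S$, so Proposition~\ref{pr:modoutnilpotents} gives that $S/NS$ is \ua. Any unit of $S/NS$ lifts to a unit of $S$ (since its product with any lift of its inverse equals $1$ plus a nilpotent, hence is itself a unit), and being \pins\ over $L$ passes to the image in $S/NS$; so the reduced case applies to $R/N$, yielding that $R/N$ is \ua\ with units \pins\ over $K$. A second application of Proposition~\ref{pr:modoutnilpotents} then promotes this to $R$ being \ua. To check that a given unit $u \in R$ is itself \pins\ over $K$: by the previous step, $\bar u^{p^m} = \bar c$ in $R/N$ for some $c \in K$ and $m \geq 0$, so $u^{p^m} - c \in N$, and hence $(u^{p^m} - c)^{p^\ell} = 0$ for some $\ell$; Frobenius rearranges this to $u^{p^{m+\ell}} = c^{p^\ell} \in K$.

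The main obstacle is that in characteristic $p$ one cannot assume $S$ is reduced when $R$ is (since $L/K$ need not be separable), which breaks the clean characteristic-zero argument directly. The \pins\ hypothesis is tailor-made to repair this: after a Frobenius power, every unit of $S$ lies in the image of $L$, exactly where Lemma~\ref{lem:tensor} can be brought to bear. Everything else is bookkeeping against nilpotents, handled uniformly by Proposition~\ref{pr:modoutnilpotents}.
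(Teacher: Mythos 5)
Your proof is correct, and its core is exactly the paper's argument: raise a unit $u\otimes 1$ of $S$ to a Frobenius power to land in $1\otimes L$, apply Lemma~\ref{lem:tensor} to get $u^{p^e}\in K$, and then compute $(u+1)^{p^e}=u^{p^e}+1$ to conclude unit-additivity. The only difference is that your reduced/non-reduced case split is unnecessary: your ``reduced case'' argument never actually uses that $R$ is reduced (the parenthetical ``hence zero'' is not needed, since $(u+1)^{p^n}$ being zero or a unit already gives nilpotent or unit for $u+1$), so the entire second half of your proof via Proposition~\ref{pr:modoutnilpotents} can be deleted, which is precisely how the paper proceeds.
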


\begin{proof}
Let $u$ be a unit of $R$.  Then $u \otimes 1$ is a unit of $S$, so by assumption there is some $e \in \N$ and $c \in L^\times$ with $(u \otimes 1)^{p^e} = 1 \otimes c$.  That is, $u^{p^e} \otimes 1 = 1 \otimes c$. Then by Lemma~\ref{lem:tensor}, $u^{p^e} = c \in K$.  Hence $u$ is \pins\ over $K$.
Now, $(u+1)^{p^e} = u^{p^e}+1 = c+1 \in K$, which is either a unit or zero, so $u+1$ is either a unit or nilpotent. Since $u$ was an arbitrary unit of $R$, it follows that $R$ is \ua.
\end{proof}

\begin{thm}\label{thm:geom0}
Let $R$ be a ring and $K \subseteq R$ a field of characteristic zero.  The following are equivalent. \begin{enumerate}
    \item For all field extensions $L/K$, $R \otimes_K L$ is \ua\ with all units in $L$ mod nilpotents.
    \item For all algebraic field extensions $L/K$, $R \otimes_KL$ is \ua\ with all units in $L$ mod nilpotents.
    \item For all finite field extensions $L/K$, $R \otimes_K L$ is \ua\ with all units in $L$ mod nilpotents.
    \item $R \otimes_K K^\alg$ is \ua\ with all units in $K^\alg$ mod nilpotents.
\end{enumerate}
\end{thm}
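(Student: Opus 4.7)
The plan is to prove the cycle $(1) \Rightarrow (2) \Rightarrow (3) \Rightarrow (4) \Rightarrow (1)$. The implications $(1) \Rightarrow (2) \Rightarrow (3)$ are immediate, since every finite extension is algebraic and every algebraic extension is a field extension; the direction $(1) \Rightarrow (4)$ is also trivial by specializing to $L = K^\alg$. So the substantive work lies in $(3) \Rightarrow (4)$ and $(4) \Rightarrow (1)$.

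For $(3) \Rightarrow (4)$, my plan is a direct limit argument. Write $K^\alg = \varinjlim L_i$ as the filtered colimit of its finite subextensions $L_i/K$; since the tensor product preserves colimits, $R \otimes_K K^\alg = \varinjlim (R \otimes_K L_i)$. A unit $u$ of $R \otimes_K K^\alg$ together with its inverse lies in some $R \otimes_K L_i$, in which $u$ is therefore already a unit; by hypothesis there exist $c \in L_i$ and a nilpotent $n \in R \otimes_K L_i$ with $u = c + n$. Pushing into the colimit gives $c \in K^\alg$ and $n$ nilpotent in $R \otimes_K K^\alg$. Then $u + 1 = (c+1) + n$ is nilpotent if $c = -1$ and a unit otherwise (a unit plus a nilpotent is a unit), confirming both \ua-ness of $R \otimes_K K^\alg$ and that its units lie in $K^\alg$ mod nilpotents.

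For $(4) \Rightarrow (1)$, I would engineer a common overfield. Given any field extension $L/K$, let $M$ be an algebraic closure of $L$; since $M$ is algebraically closed and contains $K$, it contains an isomorphic copy of $K^\alg$, so both $L$ and $K^\alg$ embed in $M$. Now apply Proposition~\ref{pr:Sawin} with algebraically closed base field $K^\alg$, ring $R \otimes_K K^\alg$ (which by (4) is \ua\ with all units in $K^\alg$ mod nilpotents), and reduced $K^\alg$-algebra $M$: the conclusion is that
\[
(R \otimes_K K^\alg) \otimes_{K^\alg} M \;\cong\; R \otimes_K M
\]
is \ua\ with all units in $M$ mod nilpotents. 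Rewriting the same ring as $(R \otimes_K L) \otimes_L M$ and invoking Lemma~\ref{lem:fielddescentchar0} with base field $L$ (of characteristic zero) and extension $M/L$, I conclude that $R \otimes_K L$ is \ua\ with all units in $L$ mod nilpotents, as required.

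The main obstacle is the asymmetry in $(4) \Rightarrow (1)$: an arbitrary $L/K$ has no canonical relationship with $K^\alg$, so the crucial move is to embed both $L$ and $K^\alg$ into a common field $M$ (conveniently, $\bar L$), ascend via Sawin's tensor proposition, and then descend along $M/L$ via Lemma~\ref{lem:fielddescentchar0}. Characteristic zero is essential for the descent step, since it guarantees separability and hence that the relevant tensor products are reduced; the positive-characteristic analogue (Theorem~\ref{thm:geomp}) will presumably have to accommodate purely inseparable phenomena instead.
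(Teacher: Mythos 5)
Your proposal is correct and follows essentially the same route as the paper: the nontrivial implications are handled identically, with $(3)\Rightarrow(4)$ descending a unit and its inverse to a finite subextension of $K^\alg$, and $(4)\Rightarrow(1)$ ascending from $K^\alg$ to $L^\alg$ via Proposition~\ref{pr:Sawin} and then descending along $L^\alg/L$ via Lemma~\ref{lem:fielddescentchar0}. The only cosmetic differences are your explicit colimit framing and your (harmless, in fact slightly more careful) verification that unit-additivity of $R\otimes_K K^\alg$ follows from its units lying in $K^\alg$ mod nilpotents.
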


\begin{proof}
It is obvious that (1) $\implies$ (2) and (2) $\implies$ (3).

Suppose (3), and let $u$ be a unit of $R \otimes_K K^\alg$.  Let $v=1/u$.  Then there is a finite field extension $L/K$ with $u,v \in R \otimes_KL$.  Thus $u$ is a unit of $R \otimes_K L$, so $u-n \in L^\times$ for some nilpotent element $n$ of $R \otimes_K L$, so $u-n \in (K^\alg)^\times$. Hence (4) holds.

Finally suppose (4) holds.  Let $L/K$ be a field extension.  We have $R \otimes_K L^\alg \cong (R \otimes_K K^\alg) \otimes_{K^\alg} L^\alg$, which by assumption and Proposition~\ref{pr:Sawin} is \ua\ with all units in $L^\alg$ mod nilpotents.  But $R \otimes_K L^\alg \cong (R \otimes_K L) \otimes_L L^\alg$, so by Lemma~\ref{lem:fielddescentchar0}, (1) follows.
\end{proof}

\begin{thm}\label{thm:geomp}
Let $R$ be a ring and $K \subseteq R$ a field of characteristic $p>0$.  The following are equivalent:
\begin{enumerate}
    \item For all field extensions $L/K$, $R \otimes_K L$ is \ua\ with all units \pins\ over $L$.
    \item For all algebraic field extensions $L/K$, $R \otimes_KL$ is \ua\ with all units \pins\ over $L$.
    \item For all finite field extensions $L/K$, $R \otimes_KL$ is \ua\ with all units \pins\ over $L$.
    \item $R \otimes_K K^\alg$ is \ua\ with all units in $K^\alg$ mod nilpotents.
\end{enumerate}
\end{thm}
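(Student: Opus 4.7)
The proof will closely mirror that of Theorem~\ref{thm:geom0}, with the key structural twist being the role played by the perfectness of $K^\alg$ (and more generally $L^\alg$) in characteristic $p$. The implications (1) $\implies$ (2) $\implies$ (3) are immediate by restricting the class of field extensions under consideration.

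For (3) $\implies$ (4), I would argue as follows. Given a unit $u$ of $R \otimes_K K^\alg$ with inverse $v$, there is a finite field extension $L/K$ such that both $u$ and $v$ lie in $R \otimes_K L$, so $u$ is a unit there. By (3), $u$ is \pins\ over $L$, hence $u^{p^n} \in L \subseteq K^\alg$ for some $n$. Since $K^\alg$ is perfect, Remark~\ref{rem:sep} converts this into $u - b$ being nilpotent for some $b \in K^\alg$; thus all units of $R \otimes_K K^\alg$ lie in $K^\alg$ modulo nilpotents. Unit-additivity then follows via Proposition~\ref{pr:characterizeua}: writing a unit as $u = b + n$ with $b \in K^\alg$ and $n$ nilpotent, the element $u+1 = (b+1) + n$ is either nilpotent (when $b = -1$) or the sum of a unit of $K^\alg$ and a nilpotent, hence a unit.

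For (4) $\implies$ (1), let $L/K$ be an arbitrary field extension. Write $R \otimes_K L^\alg \cong (R \otimes_K K^\alg) \otimes_{K^\alg} L^\alg$. Assumption (4) supplies exactly the hypotheses of Proposition~\ref{pr:Sawin} with base field $K^\alg$, so $R \otimes_K L^\alg$ is \ua\ with all units in $L^\alg$ modulo nilpotents. Since $L^\alg$ is perfect, Remark~\ref{rem:sep} rephrases this as: every unit of $R \otimes_K L^\alg$ is \pins\ over $L^\alg$. Now view the same ring as $(R \otimes_K L) \otimes_L L^\alg$ and apply Lemma~\ref{lem:fielddescentcharp} with base field $L$ and extension $L^\alg/L$; the conclusion is that $R \otimes_K L$ is \ua\ with all units \pins\ over $L$, which is (1).

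The main subtlety, just as in the characteristic zero case, lies in the (4) $\implies$ (1) step, but here there is an extra wrinkle: one must pass freely between the formulations ``\pins\ over a field $F$'' and ``in $F$ modulo nilpotents.'' In characteristic zero the two coincide by definition (up to separability), whereas in characteristic $p$ they only coincide when $F$ is perfect, via Remark~\ref{rem:sep}. Thus the only place where care is required is in verifying that the intermediate statement produced by Proposition~\ref{pr:Sawin} over $L^\alg$ matches the hypothesis needed to invoke Lemma~\ref{lem:fielddescentcharp}; once the perfectness of $L^\alg$ is used to bridge these formulations, the two black-box results combine cleanly to yield the theorem.
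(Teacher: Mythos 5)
Your proposal is correct and follows essentially the same route as the paper: the same reduction (1)$\implies$(2)$\implies$(3), the same passage to a finite subextension for (3)$\implies$(4) using perfectness of $K^\alg$, and the same two-step factorization through $L^\alg$ combining Proposition~\ref{pr:Sawin} with Lemma~\ref{lem:fielddescentcharp} for (4)$\implies$(1). The only differences are cosmetic: you cite Remark~\ref{rem:sep} where the paper carries out the $p^n$-th power computation inline, and you spell out the unit-additivity of $R\otimes_K K^\alg$ in step (3)$\implies$(4), which the paper leaves implicit.
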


\begin{proof}
As in the characteristic zero case, it is clear that (1) $\implies$ (2) and (2) $\implies$ (3).

Suppose (3), and let $u$ be a unit of $R \otimes_K K^\alg$.  Let $v=1/u$.  Then there is a finite field extension $L/K$ with $u,v \in R \otimes_KL$.  Thus $u$ is a unit of $R \otimes_K L$, so $u^{p^n} \in L^\times \subseteq (K^\alg)^\times$ for some $n \in \N$.  Since $K^\alg$ is a perfect field, we may choose $c \in K^\alg$ such that $c^{p^n} = u^{p^n}$. Then $u-c$ is nilpotent, so (4) holds.

Finally suppose (4) holds.  Let $L/K$ be a field extension.  We have $R \otimes_K L^\alg \cong (R \otimes_K K^\alg) \otimes_{K^\alg} L^\alg$, which by assumption and Proposition~\ref{pr:Sawin} is \ua\ with all units in $L^\alg$ mod nilpotents.  In particular, for any unit $u$, there is some $\lambda \in L^\alg$ and $n \in \N$ with $(u-\lambda)^{p^n} = 0$.  Thus, $u^{p^n} \in L^\alg$, so $u$ is \pins\ over $L^\alg$.  But $R \otimes_K L^\alg \cong (R \otimes_K L) \otimes_L L^\alg$, so by Lemma~\ref{lem:fielddescentcharp}, (1) follows.
\end{proof}

\begin{defn}\label{def:geoua}
If $R$ is a ring and $K$ is a field inside $R$, call $R$ \emph{geometrically \ua\ over $K$} if the equivalent conditions of Theorem~\ref{thm:geom0} or \ref{thm:geomp} hold.
\end{defn}

\begin{example}\label{ex:notgeom}
It follows from the definition that any ring geometrically \ua\ over a field is \ua.  However, the converse does not hold.

Let $R = \R[X,Y] / (X^2 + Y^2 -1)$, the coordinate ring of the unit circle.  Set $S := R \otimes_\R \C = \C[X,Y] / (X^2 + Y^2 -1)= \C[X,Y] / ((X+iY)(X-iY)-1)$.  I claim that $U(S) = \{\lambda \cdot (x+iy)^z \mid \lambda \in \C^\times, z \in \Z\}$.  To see this, first note that any such element is in $S$, as $(x+iy)^{-1} = x-iy$ by construction. Next, note that $\C[t, t^{-1}] \cong \C[U,V]/ (UV-1) \cong \C[X,Y] / (X^2 + Y^2 - 1)$ via $t \mapsto u$, $u \mapsto x+iy$, and $v \mapsto x-iy$, the latter an isomorphism since $X+iY, X-iY$ are a basis for the $\C$-vector space generated by $X$ and $Y$.  Then since $U(\C[t, t^{-1}]) = \{\lambda \cdot t^z \mid \lambda \in \C^\times, z \in \Z\}$, it follows that the group of units of $S$ is as claimed.  But then one sees immediately that $S$ is not \ua, since $x+iy+1$ is neither zero nor in the given set.  Thus, $R$ is not geometrically \ua\ over $\R$.

It remains to show that $R$ itself \emph{is} \ua.  To see this, first consider the following claim.

\noindent \textbf{Claim:} For all positive integers $n$, we have $(x+iy)^n = \phi_n(x) + i \psi_n(x)y$, where $\phi_n, \psi_n \in \R[X]$ are polynomials such that $\deg \phi_n = n$, $\deg \psi_n =n-1$, and $\phi_n$, $\psi_n$ have the same leading coefficient.

\begin{proof}[Proof of claim]
Proceed by induction, where the base case is given by $\phi_1=X$, $\psi_1 = 1$.  So let $n\geq 1$ and assume the claim holds for $n$.  Let $c := $ the common leading coefficient of $\phi_n$ and $\psi_n$. Then \begin{align*}
(x+iy)^{n+1} &= (x+iy)^n (x+iy) = (\phi_n(x) + i\psi_n(x)y)(x+iy)\\
&=(x \phi_n(x) - y^2 \psi_n(x)) + iy(x \psi_n(x) + \phi_n(x)) \\
&= (x \phi_n(x) + (x^2-1)\psi_n(x)) + iy(x \psi_n(x) + \phi_n(x)).
\end{align*}
We have $\deg x \phi_n(x) = \deg ((x^2-1) \psi_n(x)) = n+1$, with both polynomials having leading coefficient $c$, whence $x \phi_n(x) + (x^2-1)\psi_n(x)$ has leading coefficient $2c$ (since $\chr \R \neq 2$).  Similarly, we have $\deg x \psi_n(x) = \deg \phi_n(x) = n$, with both polynomials having leading coefficient $c$, so the leading coefficient of their sum is $2c$.  Hence, with $\phi_{n+1}(X) := X\phi_n(X) + (X^2-1)\psi_n(X)$ and $\psi_{n+1}(X) := X \psi_n(X) + \phi_n(X)$, the claim is proved.
\end{proof}
It remains to show that the only units of $R$ are in $\R$.  Since $U(R) \subseteq U(S)$, it suffices to show that $U(S) \cap R \subseteq \R$.  Accordingly, let $u$ be a unit of $S$ that is in $R$.  Then there is some nonzero complex number $\lambda$ and some $z \in \Z$ with $u=\lambda(x+iy)^z$.  By replacing $u$ with its inverse, we may assume $z\geq 0$.  Suppose $z\geq 1$.  Write $\lambda = a+bi$, with $a,b \in \R$. Then by the claim above, we have \begin{align*}
u &= \lambda(x+iy)^z = (a+bi)(\phi_z(x) + iy\psi_z(x)) \\
&= (a\phi_z(x) - by\psi_z(x)) + i(ay\psi_z(x) + b\phi_z(x)).
\end{align*}
Since $u$ can have no imaginary part, it follows that $ay\psi_z(x) + b \phi_z(x) = 0$.  Since $R \cong \R[x] \oplus y\R[x]$ as vector spaces over $\R$, it further follows that $a\psi_z(x) = b\phi_z(x) = 0$ in $\R[x]$.  But since $\psi_z$ and $\phi_z$ are nonzero polynomials, it follows that $a=b=0$, so that $\lambda = 0$, a contradiction.  Thus, $z=0$, so $u=\lambda = a+bi \in R$, so that $b=0$ and $u=\lambda = a \in \R^\times$.  Hence, $R$ is \ua, but not geometrically \ua\ over $\R$.

It thus follows from Theorem~\ref{thm:FTAiffua} that $S^1 = V_\R(X^2 + Y^2 -1)$ is fundamental, while $V_\C(X^2 + Y^2 -1)$ is not.
\end{example}

\section{Components and minimal primes}

From a geometric standpoint, it is reasonable to ask whether unit-additivity of a ring with connected spectrum is equivalent to unit-additivity of its irreducible components.  I show in this section that the ``$\Leftarrow$'' implication holds, while the ``$\Rightarrow$'' implication fails.  However, I do not know whether the geometric analogue holds even of the ``$\Leftarrow$'' implication for locally \ua\ schemes, even those of finite type over a field.
\begin{prop}\label{pr:modminprimes}
Let $R$ be a ring with connected spectrum and only finitely many minimal primes.  Suppose $R/\p$ is \ua\ for all minimal primes $\p$.  Then $R$ is \ua.
\end{prop}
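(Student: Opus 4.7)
The plan is first to reduce to the reduced case. By Proposition~\ref{pr:modoutnilpotents}, $R$ is \ua\ if and only if $R/N$ is, where $N = \bigcap_i \p_i$ is the nilradical (since $R$ has finitely many minimal primes, this intersection is exactly $N$, consisting of nilpotents). Moreover, $\Spec R$ and $\Spec R/N$ are homeomorphic, and the minimal primes of $R/N$ are the images of the minimal primes of $R$, with identical quotients. So we may assume $R$ is reduced with $\bigcap_{i=1}^n \p_i = 0$.

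Let $u \in U(R)$. For each $i$, the image $u+\p_i$ is a unit of the \ua\ domain $R/\p_i$, so $u+1+\p_i$ is either zero or a unit of $R/\p_i$. Partition $\{1,\ldots,n\}$ as $I := \{i : u+1 \in \p_i\}$ and $J := \{i : u+1+\p_i \in U(R/\p_i)\}$. If $J = \{1,\ldots,n\}$, then for any prime $\q$ of $R$ one may pick a minimal prime $\p_j \subseteq \q$; if $u+1 \in \q$, then $u+1+\p_j$ would lie in the prime $\q/\p_j$ of $R/\p_j$, contradicting its being a unit. So $u+1$ lies in no prime and is a unit. If $I = \{1,\ldots,n\}$, then $u+1 \in \bigcap_i \p_i = 0$, so $u+1$ is nilpotent.

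The main obstacle is ruling out the mixed case, and this is where connectedness of $\Spec R$ must be used. The plan is to encode the incidence of irreducible components as a graph $G$ on $\{1,\ldots,n\}$ with $i \sim j$ iff $V(\p_i) \cap V(\p_j) \neq \emptyset$. Connectedness of $\Spec R = \bigcup_i V(\p_i)$ forces $G$ to be connected: any partition of $\{1,\ldots,n\}$ without cross-edges would split $\Spec R$ as a disjoint union of two nonempty closed sets. Now suppose $i \in I$ and $j \in J$ were adjacent in $G$, and pick $\q \supseteq \p_i + \p_j$. Then $u+1 \in \p_i \subseteq \q$, while $u+1+\p_j \in U(R/\p_j)$ forces $u+1 \notin \q$, a contradiction. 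Hence no edge of $G$ crosses $I$ and $J$, and the connectedness of $G$ forces either $I$ or $J$ to equal $\{1,\ldots,n\}$, which by the previous paragraph finishes the proof.
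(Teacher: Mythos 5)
Your proof is correct and follows essentially the same strategy as the paper's: partition the minimal primes according to whether $u+1$ is a unit or zero modulo $\p$, and use connectedness of $\Spec R$ to show that a prime lying over both types of minimal prime yields a contradiction. The only real difference is cosmetic: you reduce to the reduced case first (which makes the two extreme cases immediate), whereas the paper works with a possibly non-reduced $R$ and handles the ``unit modulo every minimal prime'' case with a product-of-terms nilpotency argument.
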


\begin{proof}
Let $u$ be a unit of $R$.  Let $t=u+1$.  Let $X = \{\p \in \Min R \mid t+\p \text{ is a unit in } R/\p\}$.  Let $Y := \{\p \in \Min R \mid t\in \p \}$.  Since $R/\p$ is a \ua\ domain for each $\p \in \Min R$ and $u+\p$ is a unit in each $R/\p$, it follows that $X \cup Y = \Min R$.

Let $X' := \{P \in \Spec R \mid P $ contains an element of $X\}$, and $Y' := \{P \in \Spec R \mid P $ contains an element of $Y\}$.  Then $X' \cup Y' = \Spec R$, and since $\Min R$ is finite, each of $X'$, $Y'$ is closed in $\Spec R$.  .  

If $X'=\Spec R$, then $t$ is a unit mod every minimal prime.  That is, for each $\p \in \Min R$, there exists $c_\p \in R$ with $1-tc_\p \in \p$.  Let $a := \prod_{\p \in \Min R} (1-tc_\p)$.  Then $a$ is in every minimal prime, hence nilpotent, so there is some $n\in \N$ with $a^n =0$.  But since each $1-tc_\p \equiv 1 \mod tR$, it follows that $0 = a^n \equiv 1 \mod tR$.  That is, $1\in tR$, so $t$ is a unit.

If $Y' = \Spec R$, then $t$ is in every minimal prime of $R$, so $t$ is nilpotent.

Since $\Spec R$ is connected, the remaining case is where  $X' \cap Y' \neq \emptyset$.  Let $P \in X' \cap Y'$.
Then there exist $\p\in X$ and $\q \in Y$ with $\p + \q \subseteq P$.  Since $t+\p$ is a unit in $R/\p$, it follows that it cannot be in any prime ideal of $R/\p$, whence $t \notin P$.  On the other hand, $t \in \q \subseteq P$, which is a contradiction.
\end{proof}

\begin{rem}
The connected spectrum condition above is necessary.  To see this, let 
$S,T$ be unit-additive domains whose fields of units are not both isomorphic to $\F_2$.  Then $R := S\times T$ has exactly two minimal primes $\p, \q$, such that $R/\p \cong S$ and $R/\q \cong T$. However, $R$ is not \ua\ by Proposition~\ref{pr:uaprod}. 
\end{rem}

The following question is then natural.

\begin{question}\label{q:schemecomponents}
    Let $X$ be a connected variety of finite type over a(n algebraically closed) field $k$.  Suppose all irreducible components of $X$ are locally \ua.  Is $X$ locally \ua?
\end{question}

\begin{example}\label{ex:cantdescendcomponents}
The converse to Proposition~\ref{pr:modminprimes} is false.  Let $A = k[X,Y,Z]$, $R = A/(XYZ-Z)$, $\p = (xy-1)R$, $P = (XY-1)A$, $\q = zR$, and $Q = ZA$.  Note that $R/\p \cong A/P \cong k[X, X^{-1},Z]$ and $R/\q \cong A/Q \cong k[X,Y]$ by natural diagrams. Note also that $\p$, $\q$ are the minimal primes of $R$.  Since $R/\p$ is not \ua\ (as $X$ is a unit but $X+1$ is not), it will suffice to show that $R$ is \ua.  I will show that all units of $R$ are in $k$.

Let $f \in R$ be a unit.  Let $F$ be a lift of $f$ to $A$.  Let $\phi: R \onto A/P$ and $\theta: R \onto A/Q$ be the natural maps.
Since $f$ is a unit, we also have that $\phi(f)$ and $\theta(f)$ are units.  But the units of $k[X,X^{-1},Z]$ all look like $cX^n$ for $n \in \Z$ and $c \in k^\times$. Thus, without loss of generality $F=(XY-1)G + c X^n$ for some $G \in A$ and $n \geq 0$ (since if $n<0$, one can switch the roles of $X$ and $Y$).  On the other hand, the units of $k[X,Y]$ are all in $k$.  So $F = ZH + b$, for some $H \in A$ and some $b \in k^\times$.  We have 
$(XY-1)G + c X^n = ZH + b$, which after subtracting become
$cX^n - b = ZH - (XY-1)G \in (XY-1, Z)A$.

Now let $\ia = P+Q$, which is a prime ideal since $A / \ia \cong k[X, X^{-1}]$.  Let $\beta: A \onto k[X, X^{-1}]$ be the corresponding map.  Then $\beta(cX^n -b) = cX^n - b =0\in k[X, X^{-1}]$, whence $b=c$ and $n=0$.  Thus, $ZH = (XY-1)G$, so $G$ must be a multiple of $Z$, say $G=ZG'$.  Then $ZH = (XY-1) ZG'$, so $H = (XY-1)G'$.  Thus, $F = (XY-1)ZG' + c \in c + (XYZ-Z)A$, so that $f = c \in k^\times$.

Thus, it also follows that a locally \ua\ scheme need not have all its irreducible components locally \ua.   That is, the converse to  Question~\ref{q:schemecomponents} has a negative answer.
\end{example}
 
\section*{Acknowledgment}
The author thanks Will Sawin \cite{Saw-MOunits} for providing the proof of the crucial reduced case of Proposition~\ref{pr:Sawin}.  I am also grateful to Sean Lawton for comments that improved the exposition of Section~\ref{sec:fund}.  I am also thankful to the referee for a thoughtful and interesting reading of the paper, and for pointing out places where I could clarify further.

Finally, I extend a warm note of appreciation to Jay Shapiro, who investigated these phenomena with me in the first place and, in the current paper, made minor suggestions, pointed out Example~\ref{ex:nouniquemaximal}, and proved Proposition~\ref{pr:Gdom} and Corollary~\ref{cor:hasGdom}.

\providecommand{\bysame}{\leavevmode\hbox to3em{\hrulefill}\thinspace}
\providecommand{\MR}{\relax\ifhmode\unskip\space\fi MR }
\providecommand{\MRhref}[2]{%
	\href{http://www.ams.org/mathscinet-getitem?mr=#1}{#2}
}
\providecommand{\href}[2]{#2}

\end{document}